\documentclass[12pt]{amsart}
\usepackage{amsmath}
\usepackage{graphicx}
\usepackage{hyperref}
\usepackage{mathtools}
\usepackage{amsfonts}
\usepackage{amssymb}
\usepackage[T1]{fontenc}
\usepackage{amsthm}
\usepackage{fullpage}
\usepackage{enumitem}
\usepackage[T1]{fontenc}
\usepackage[utf8]{inputenc}
\usepackage{bm}

\newtheorem{theorem}{Theorem}[section]

\newtheorem{lemma}[theorem]{Lemma}

\newtheorem{proposition}[theorem]{Proposition}

\theoremstyle{definition}
\newtheorem{definition}{Definition}[section]

\theoremstyle{remark}
\newtheorem{remark}{Remark}

\numberwithin{equation}{section}

\title{ On Some Non-Permutations of Quadratic extensions Of Finite Field}

\keywords{Permutation Polynomial, Weil Sum, Finite Fields }
\subjclass[2020]{11T06, 11T23, 12E10}

\author{Bidushi Sharma}
\address{Department of Mathematical Sciences, Tezpur University, Tezpur, Assam, 784028, India}
\email{msp22102@tezu.ac.in}
\author{DHIREN KUMAR BASNET*}
\address{Department of Mathematical Sciences, Tezpur University, Tezpur, Assam, 784028, India}
\email{dbasnet@tezu.ernet.in}
\begin{document}
	\begin{abstract}

    In this article, we study polynomials over $\mathbb{F}_{q^2}$ that do not permute $\mathbb{F}_{q^2}$. More precisely, we characterize polynomials of the forms $x^q + b x^2 + c x + d$ and $x^{q+1} + b x^q + c x + d$ over $\mathbb{F}_{q^2}$ according to whether they are permutation or non-permutation polynomials. To this end, we determine the exact number of zeros of these polynomials using existing results on certain special Weil sums.
    \end{abstract}

	\maketitle
	
	\section{Introduction}
    A permutation polynomial (PP) over the finite field $\mathbb{F}_{q}$ is a polynomial that induces a bijection on $\mathbb{F}_{q}$. Permutation polynomials have attracted the interest of many researchers due to their wide applications in coding theory \cite{c2,c1}, cryptography \cite{cr1,cr2}, and combinatorial design \cite{cd}. For a comprehensive survey of recent advancements on this topic, we refer the readers to \cite{r5,p1,p2,p3}. Motivated by these works, we investigate conditions under which certain generalized polynomials fail to be permutations. In particular, we establish necessary and sufficient conditions on the coefficient parameters that determine whether these polynomials permute or fail to permute the quadratic extension $\mathbb{F}_{q^2}$ of $\mathbb{F}_{q}$.

The central idea used in this paper is that a polynomial $f$ permutes $\mathbb{F}_{q^2}$ if and only if the equation $f(x)=t$ has exactly one solution in $\mathbb{F}_{q^2}$ for every $t \in \mathbb{F}_{q^2}$. Equivalently, if there exists at least one $t \in \mathbb{F}_{q^2}$ such that the polynomial $f(x)+t$ has at least two zeros in $\mathbb{F}_{q^2}$, then $f$ does not permute $\mathbb{F}_{q^2}$. Thus, in the first part of this article, we focus on determining the exact number of zeros of certain special polynomials over $\mathbb{F}_{q^2}$. These root-counting results are then used to characterize the corresponding polynomials as permutations or non-permutations. More precisely, we study polynomials of the form $x^q + b x^2 + c x + d$ and $x^{q+1} + b x^q + c x + d$. Note that none of the polynomials considered here are of Wan-Lidl type, that is, of the form $x^s f(x^r)$ with $s,r \in \mathbb{N}$ and $r \mid (q-1)$, nor they are affine $q$-polynomial. It is worth noting that the polynomial $x^q + b x^2 + c x+d$, over fields of even characteristic, behaves as an affine $2$-polynomial, however, it is not affine $q$-polynomial.
    
    An important tool in determining the number of zeros of polynomials over $\mathbb{F}_{q^2}$ is the Weil sum. A Weil sum is a character sum over a finite field in which the character is evaluated at a polynomial argument. In our case, we use Weil sums associated with the canonical additive character, namely
$$
\sum_{x \in \mathbb{F}_{q^2}} \chi\big(f(x)\big),
$$
where $\chi$ is the canonical additive character of $\mathbb{F}_{q^2}.$
Let $M$ denote the number of zeros of $f(x)$ in $\mathbb{F}_{q^2}$. Then
$$
M = \frac{1}{q^2} \sum_{x \in \mathbb{F}_{q^2}} \sum_{y \in \mathbb{F}_{q^2}} \chi\big(y f(x)\big),
$$
which provides a convenient tool for determining $M$. Using character sums to study permutation behavior has been employed in various works to study permutation polynomials and permutation rational functions. For instance, in \cite{pp1}, the permutation behavior of affine $q$-polynomials of the form
$$
f(x) = a_t x^{q^t} + a_{t-1} x^{q^{t-1}} + \cdots + a_1 x^q + a_0 x + a
$$
is investigated. In \cite{pp2}, over fields of even characteristic, the permutation behavior of polynomials of the form $\operatorname{Tr}(A x^{q+1}) + L(x)$, where $L$ is $q$-linearized and $A\neq 0$, is studied. Further applications of this method can be found in \cite{r1}.

In Section 2, we collect the preliminary results required for our work. Here, $\operatorname{Tr}$ and $\operatorname{N}$ denote the trace and norm maps from $\mathbb{F}_{q^2}$ to $\mathbb{F}_q$. In Section 3, we begin by determining the number of roots of the polynomial over $\mathbb{F}_{q^2}$ and conclude by identifying explicit cases in which it does not permute $\mathbb{F}_{q^2}$.
    \section {Preliminaries}
    In Lemmas~\ref{l1}--\ref{l3}, we present all the Weil sums that arise in the process of determining the number of roots over both odd and even ordered fields.
    \begin{lemma}\cite{r1}\label{l1}
        Let $q$ be odd, $\chi$ denote the canonical additive character of $\mathbb{F}_{q^2}$, and let $\eta$ denote the quadratic character of $\mathbb{F}_q^{*}$. Let $A, B, C \in \mathbb{F}_{q^2}$ with $B \neq 0$, and define $D = \operatorname{Tr}(A)^2 - 4N(B)$. If $D \neq 0$, then  
$$
\sum_{w \in \mathbb{F}_{q^2}} \chi\big( Aw^{q+1} + Bw^2 + Cw \big)
= - \eta(D) \, q \, \chi\big( -A\theta^{q+1} - B\theta^2 \big),
$$
where  
$$
\theta = \frac{-\operatorname{Tr}(A)C^q + 2B^q C}{\operatorname{Tr}(A)^2 - 4N(B)},
$$
that is, $\theta$ is the unique root of the equation $\operatorname{Tr}(A)x^q + 2Bx + C = 0$. 
\end{lemma}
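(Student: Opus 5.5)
The plan is to complete the square: shift $w \mapsto w+\theta$ to remove the linear term, and then evaluate the remaining pure quadratic Weil sum by viewing it as a nondegenerate quadratic form over $\mathbb{F}_q$.

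\textbf{Step 1: the shift.} Write $Q(w)=Aw^{q+1}+Bw^2$. Since $\chi(z)=\chi(z^q)$ for all $z\in\mathbb{F}_{q^2}$, we have
$$\chi(Aw^{q+1}) = \chi\big(\tfrac{\operatorname{Tr}(A)}{2}\, w^{q+1}\big),$$
so we may replace $A$ by $a=\operatorname{Tr}(A)/2\in\mathbb{F}_q$. Substituting $w=u+\theta$ gives
$$Q(u+\theta)+C(u+\theta) = Q(u) + \big(a\theta^q u + a\theta u^q + 2B\theta u + Cu\big) + Q(\theta) + C\theta .$$
Under $\chi$, the term $a\theta u^q$ can be replaced by $a\theta^q u$, so the linear part becomes $\chi\big((\operatorname{Tr}(A)\theta^q + 2B\theta + C)u\big)$. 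This vanishes once $\theta$ solves $\operatorname{Tr}(A)x^q+2Bx+C=0$.

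\textbf{Step 2: existence and uniqueness of $\theta$, and the constant term.} Apply Frobenius to this equation and eliminate $x^q$. This gives the linear system whose determinant is $\operatorname{Tr}(A)^2-4N(B)=D\ne 0$, so $\theta$ is unique and equals the stated formula. The constant term is then $\chi(Q(\theta)+C\theta)$. Using $\chi(C\theta)=-\chi$-equivalently $C\theta \equiv -\operatorname{Tr}(A)\theta^{q+1}-2B\theta^2$ under the trace (multiply the defining equation by $\theta$), we get
$$Q(\theta)+C\theta \;\equiv\; -A\theta^{q+1}-B\theta^2 \pmod{\ker \operatorname{Tr}},$$
which is exactly the factor in the statement.

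\textbf{Step 3: the pure quadratic sum.} It remains to show
$$S=\sum_u \chi\big(Au^{q+1}+Bu^2\big) = -\eta(D)\,q .$$
The map $u\mapsto \operatorname{Tr}(Au^{q+1}+Bu^2)$ is a quadratic form in two variables over $\mathbb{F}_q$. For a nondegenerate binary form $F$ over $\mathbb{F}_q$ with $q$ odd, the standard Gauss sum evaluation gives
$$\sum \psi(F) = \eta(-\det F)\,q ,$$
where $\psi$ is the canonical additive character of $\mathbb{F}_q$ (the two Gauss sums multiply to $\eta(-1)q$). So the task is to compute the determinant of this form.

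Choose a basis $\{1,\omega\}$ of $\mathbb{F}_{q^2}$ over $\mathbb{F}_q$. A cleaner route is to use the coordinates $(u,u^q)$ over $\overline{\mathbb{F}}_q$, where the form has Gram matrix
$$\begin{pmatrix} 2B & \operatorname{Tr}(A)\\ \operatorname{Tr}(A) & 2B^q\end{pmatrix}/2,$$
with determinant $(4N(B)-\operatorname{Tr}(A)^2)/4=-D/4$. The change of basis from $(u,u^q)$ to $\mathbb{F}_q$-coordinates has squared determinant $(\omega^q-\omega)^2$, which is a nonsquare in $\mathbb{F}_q$. Hence $\eta(-\det F) = \eta(D)\,\eta\big((\omega-\omega^q)^2\big)\cdot(\text{square}) = -\eta(D)$, giving $S=-\eta(D)q$.

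\textbf{Main obstacle.} The delicate step is tracking this sign: both the $\eta(-1)$ coming from the Gauss sums and the nonsquare discriminant $(\omega-\omega^q)^2$ must be accounted for. I would double-check it by evaluating the case $A=0$, $B=1$ directly. There $\sum_u\chi(u^2)$ is the quadratic Gauss sum of $\mathbb{F}_{q^2}$, which equals $-q$ when $q=p^m$ with $p$ odd (by the Davenport–Hasse / Stickelberger evaluation). This matches $-\eta(-4)q=-q$, since $-4$ is a square in $\mathbb{F}_q$ iff $-1$ is, and in that case $\eta(-4)=\eta(-1)$. This check would confirm the sign convention.
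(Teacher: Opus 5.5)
The paper gives no proof of this lemma. It is quoted from the literature and used as a black box (in Lemma~\ref{l6}, with $A=0$), so there is no in-paper argument to compare against. Judged on its own, your proof is correct and self-contained.

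Steps 1 and 2 are right: the reduction $A\mapsto \operatorname{Tr}(A)/2$, the shift $w=u+\theta$, and the constant term all check out. The garbled sentence in Step 2 should simply say that multiplying the defining equation by $\theta$ gives the \emph{exact} identity $C\theta=-\operatorname{Tr}(A)\theta^{q+1}-2B\theta^2$. Hence $Q(\theta)+C\theta=-a\theta^{q+1}-B\theta^2$ on the nose, and only the passage from $a$ back to $A$ uses $\chi(z)=\chi(z^q)$.

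For the existence of $\theta$, the cleanest argument is that $x\mapsto \operatorname{Tr}(A)x^q+2Bx$ is $\mathbb{F}_q$-linear with trivial kernel, by your determinant computation. It is therefore bijective on $\mathbb{F}_{q^2}$.

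Step 3 is also correct. Put $\delta=\omega^q-\omega$. Then $\delta^q=-\delta\neq\delta$ while $\delta^2\in\mathbb{F}_q$, so $\delta^2$ is a nonsquare in $\mathbb{F}_q$. The $\mathbb{F}_q$-Gram determinant is $-\delta^2D/4$, and so $\eta(-\det F)\,q=\eta(\delta^2)\eta(D)\,q=-\eta(D)\,q$.

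The one thing that is wrong is your proposed sanity check. The quadratic Gauss sum of $\mathbb{F}_{q^2}$ is not always $-q$. By Davenport--Hasse it equals $-G_q(\eta)^2=-\eta(-1)\,q$, which is $+q$ when $q\equiv 3 \pmod 4$; for $q=3$ a direct computation gives $\sum_{u\in\mathbb{F}_9}\chi(u^2)=3$. Likewise $-\eta(-4)\,q=-\eta(-1)\,q$ is not $-q$ in general, since $\eta(-4)=\eta(-1)=(-1)^{(q-1)/2}$.

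The two corrected sides still agree, so the check does confirm your sign, but as written it asserts two false equalities. Step 3 derives the sign independently and correctly, so this does not affect the validity of your proof.
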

\begin{lemma}\cite{r2}\label{l2}
    Let $q$ be even and $\chi$ denote the canonical additive character of $\mathbb{F}_{q^2}$. Let $A, B \in \mathbb{F}_{q^2}$ with $A \neq 0$. Then
$$
\sum_{w \in \mathbb{F}_{q^2}} \chi\big( A w^{q+1} + B w \big) =
\begin{cases}
q^2, & \text{if } A \in \mathbb{F}_q^* \text{ and } B = 0, \\
-q, & \text{if } A \in \mathbb{F}_{q^2}\setminus\mathbb{F}_{q} \text{ and } B=0,
\\
0, & \text{if } A \in \mathbb{F}_q^* \text{ and } B \neq 0, \\
- q \, \chi\left( \dfrac{A B^{q+1}}{\operatorname{Tr}(A)^2} \right), & \text{if } A \in \mathbb{F}_{q^2} \setminus \mathbb{F}_q \text{ and }B\neq0.
\end{cases}
$$
\end{lemma}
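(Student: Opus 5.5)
The plan is to reduce everything to a character sum over $\mathbb{F}_q$ and exploit the fact that $w^{q+1}=\operatorname{N}(w)\in\mathbb{F}_q$. Write $\psi$ for the canonical additive character of $\mathbb{F}_q$. By transitivity of the absolute trace, $\operatorname{Tr}_{\mathbb{F}_{q^2}/\mathbb{F}_2}=\operatorname{Tr}_{\mathbb{F}_q/\mathbb{F}_2}\circ\operatorname{Tr}$, so $\chi(z)=\psi(\operatorname{Tr}(z))$. Since $w^{q+1}\in\mathbb{F}_q$, we get
$$\operatorname{Tr}(Aw^{q+1}+Bw)=\operatorname{Tr}(A)\operatorname{N}(w)+\operatorname{Tr}(Bw).$$
Put $T=\operatorname{Tr}(A)=A+A^q$. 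In characteristic $2$, $T=0$ exactly when $A\in\mathbb{F}_q$. This splits the lemma naturally into the cases $A\in\mathbb{F}_q^*$ and $A\notin\mathbb{F}_q$.

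If $A\in\mathbb{F}_q^*$, then $T=0$, and the sum collapses to $\sum_{w}\chi(Bw)$. By orthogonality of additive characters this is $q^2$ when $B=0$ and $0$ when $B\neq0$, which gives the first and third cases.

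If $A\notin\mathbb{F}_q$ and $B=0$, we use that $\operatorname{N}:\mathbb{F}_{q^2}^*\to\mathbb{F}_q^*$ is surjective and $(q+1)$-to-one. Hence
$$\sum_w\psi(T\operatorname{N}(w))=1+(q+1)\sum_{u\in\mathbb{F}_q^*}\psi(Tu)=1-(q+1)=-q,$$
where we used $T\neq0$ and orthogonality.

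For $B\neq0$ we complete the square by the shift $w=v+c$. Expanding gives
$$T\operatorname{N}(v+c)=T\operatorname{N}(v)+T\operatorname{Tr}(vc^q)+T\operatorname{N}(c),$$
so the part linear in $v$ is $\operatorname{Tr}\big(v(B+Tc^q)\big)$. This vanishes for $c^q=B/T$ (signs are irrelevant in characteristic $2$), that is, for $c=B^q/T$, using $T\in\mathbb{F}_q$. The sum then equals $\psi\big(T\operatorname{N}(c)+\operatorname{Tr}(Bc)\big)\cdot(-q)$ by the $B=0$ computation. Now $T\operatorname{N}(c)=B^{q+1}/T$, and $\operatorname{Tr}(Bc)=\operatorname{Tr}(B^{q+1}/T)=2B^{q+1}/T=0$. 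So the sum is $-q\,\psi(B^{q+1}/T)$.

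Finally, $\chi(AB^{q+1}/T^2)=\psi\big(\operatorname{Tr}(A)B^{q+1}/T^2\big)=\psi(B^{q+1}/T)$, because $B^{q+1}/T^2\in\mathbb{F}_q$. This matches the stated fourth case. The main point needing care is the bookkeeping in this last step: the choice of $c$ must kill the linear term exactly, and the constant term must be evaluated correctly, in particular the vanishing of $\operatorname{Tr}(Bc)$ in characteristic $2$. The remaining steps are routine orthogonality arguments.
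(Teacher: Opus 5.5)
Your proof is correct and complete. The paper gives no proof of this lemma; it simply imports it from \cite{r2}, so there is no argument in the paper to compare yours against. Your argument is a self-contained verification. The reduction $\operatorname{Tr}(Aw^{q+1}+Bw)=\operatorname{Tr}(A)\operatorname{N}(w)+\operatorname{Tr}(Bw)$ is valid because $\chi=\psi\circ\operatorname{Tr}$ and $w^{q+1}\in\mathbb{F}_q$. Since the norm is $(q+1)$-to-one onto $\mathbb{F}_q^*$, the sum is $-q$ when $B=0$ and $\operatorname{Tr}(A)\neq 0$. For $B\neq0$, the shift $w=v+c$ with $c=B^q/\operatorname{Tr}(A)$ kills the linear term exactly. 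The constant is then handled correctly: $\operatorname{Tr}(Bc)=2B^{q+1}/\operatorname{Tr}(A)=0$, and $\psi\bigl(B^{q+1}/\operatorname{Tr}(A)\bigr)=\chi\bigl(AB^{q+1}/\operatorname{Tr}(A)^2\bigr)$.

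Your route is the usual translation (completing-the-square) technique for Weil sums of the shape $\sum\chi(Aw^{p^\alpha+1}+Bw)$. It becomes completely elementary here for two reasons. First, the linearized equation one normally has to solve to remove the linear term collapses to $\operatorname{Tr}(A)\,x=B^q$. Second, the quadratic part reduces to a norm-fibre count over $\mathbb{F}_q$.

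Your route also explains the case split in the statement: in characteristic $2$, $\operatorname{Tr}(A)=0$ exactly when $A\in\mathbb{F}_q$. Moreover, the same computation works in odd characteristic with shift $c=-B^q/\operatorname{Tr}(A)$. There $\operatorname{Tr}(Bc)=-2B^{q+1}/\operatorname{Tr}(A)$ no longer vanishes, and this produces exactly the sign in $\chi\bigl(-AB^{q+1}/\operatorname{Tr}(A)^2\bigr)$ of Lemma~\ref{l3}. So your argument establishes both cited evaluations at once.
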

Although Lemma~\ref{l2.3} is not explicitly stated as a standalone result in \cite{r2}, it can be deduced from the proof of Theorem~4.2 therein.
\begin{lemma}\cite{r2}\label{l2.3}
    Let $q$ be even and $\chi$ denote the canonical additive character of $F_{q^2}$. For $A,B\in F_{q^2}$, $A\neq 0$, we have
$$
\sum_{x\in F_{q^2}} \chi(Ax^2+Bx)
=
\begin{cases}
q^2, & \text{if } A=B^2,\\
0, & \text{otherwise}.
\end{cases}
$$
\end{lemma}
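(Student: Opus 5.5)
The plan is to reduce the quadratic Weil sum to a linear character sum, using the fact that squaring is an automorphism of $\mathbb{F}_{q^2}$ in characteristic $2$ and that the absolute trace is invariant under Frobenius. Write $q^2=2^m$. The canonical additive character is $\chi(z)=(-1)^{T(z)}$, where $T$ denotes the absolute trace from $\mathbb{F}_{q^2}$ to $\mathbb{F}_2$. The first step is to record that $T(z^2)=T(z)$ for every $z\in\mathbb{F}_{q^2}$. This holds because $z\mapsto z^2$ only permutes the Galois conjugates $z,z^2,\dots,z^{2^{m-1}}$ cyclically. Consequently $\chi(z^2)=\chi(z)$ for all $z$.

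Next, since $x\mapsto x^2$ is a bijection of $\mathbb{F}_{q^2}$, the element $A$ has a unique square root $A^{1/2}=A^{2^{m-1}}$. Then $Ax^2=(A^{1/2}x)^2$, so by the first step
$$
\chi(Ax^2)=\chi\bigl((A^{1/2}x)^2\bigr)=\chi(A^{1/2}x).
$$
Since $\chi$ is additive, this gives $\chi(Ax^2+Bx)=\chi\bigl((A^{1/2}+B)x\bigr)$ for every $x\in\mathbb{F}_{q^2}$. Summing over $x$, we obtain
$$
\sum_{x\in\mathbb{F}_{q^2}}\chi(Ax^2+Bx)=\sum_{x\in\mathbb{F}_{q^2}}\chi\bigl((A^{1/2}+B)x\bigr).
$$

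The final step applies the standard orthogonality relation for the nontrivial additive character $\chi$: the sum $\sum_{x}\chi(cx)$ equals $q^2$ if $c=0$ and equals $0$ otherwise. Here $c=A^{1/2}+B$. In characteristic $2$, we have $c=0$ if and only if $A^{1/2}=B$, and applying the Frobenius bijection this holds if and only if $A=B^2$. This yields exactly the two cases of the statement.

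There is no real obstacle in this argument. The only point needing care is justifying $\chi(z^2)=\chi(z)$, that is, the Frobenius invariance of the absolute trace. The hypothesis $A\neq 0$ is in fact not used: if $A=0$, the formula still gives $q^2$ when $B=0$ and $0$ otherwise, which is consistent with the statement.
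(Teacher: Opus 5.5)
Your proof is correct and complete. The paper gives no argument of its own for this lemma; it only remarks that the statement can be extracted from the proof of Theorem~4.2 in \cite{r2}. So there is no route to compare against: your proposal supplies a self-contained proof where the paper has only a citation.

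Your starting formula $\chi(z)=(-1)^{T(z)}$ agrees with the paper's definition $\chi=\psi\circ\operatorname{Tr}$, by transitivity of the trace. With that, Frobenius invariance gives $\chi(z^2)=\chi(z)$, which linearizes the sum via $\chi(Ax^2)=\chi(A^{1/2}x)$, and orthogonality finishes. This uses only standard facts about finite fields. It also makes transparent your correct observation that the hypothesis $A\neq 0$ is superfluous: when $A=0$ the condition $A=B^2$ reduces to $B=0$, which is exactly when the sum equals $q^2$.
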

\begin{lemma}\cite{r3,r4}\label{l3}
    Let $q$ be odd and $\chi$ denote the canonical additive character of $\mathbb{F}_{q^2}$. Let $A, B \in \mathbb{F}_{q^2}$ with $A \neq 0$. Then
$$
\sum_{w \in \mathbb{F}_{q^2}} \chi\big(A w^{q+1} + B w\big) =
\begin{cases}
-q, & \text{if } \operatorname{Tr}(A) \neq 0 \text{ and } B = 0,\\[1mm]
-q \, \chi\Big( -\dfrac{A B^{q+1}}{\operatorname{Tr}(A)^2} \Big), & \text{if } \operatorname{Tr}(A) \neq 0 \text{ and } B \neq 0,\\[1mm]
q^2, & \text{if } \operatorname{Tr}(A) = 0 \text{ and } B = 0,\\[1mm]
0, & \text{if } \operatorname{Tr}(A) = 0 \text{ and } B \neq 0.
\end{cases}
$$
\end{lemma}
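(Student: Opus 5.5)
The plan is to reduce everything to a character sum over $\mathbb{F}_q$ by using the trace factorisation of the canonical additive character, and then to complete the square when $\operatorname{Tr}(A)\neq 0$.

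Let $\chi_1$ denote the canonical additive character of $\mathbb{F}_q$. By transitivity of trace, $\chi(z)=\chi_1(\operatorname{Tr}(z))$ for $z\in\mathbb{F}_{q^2}$. Since $w^{q+1}=N(w)\in\mathbb{F}_q$, we get $\chi(Aw^{q+1})=\chi_1(\operatorname{Tr}(A)N(w))$. We will also use the Frobenius invariance $\chi(z^q)=\chi(z)$ throughout.

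\emph{Case $\operatorname{Tr}(A)=0$.} Here $\chi(Aw^{q+1})=1$ for every $w$, so the sum collapses to $\sum_{w}\chi(Bw)$. By orthogonality of additive characters this equals $q^2$ if $B=0$ and $0$ if $B\neq 0$, giving the last two cases of Lemma~\ref{l3}.

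\emph{Case $\operatorname{Tr}(A)=T\neq 0$.} First we evaluate the pure norm sum. For $y\in\mathbb{F}_q^*$ the equation $N(u)=y$ has exactly $q+1$ solutions in $\mathbb{F}_{q^2}$, and $N(u)=0$ only for $u=0$. Hence
\[
\sum_{u\in\mathbb{F}_{q^2}}\chi_1\big(T N(u)\big)=1+(q+1)\sum_{y\in\mathbb{F}_q^*}\chi_1(Ty)=1-(q+1)=-q .
\]
This settles the case $B=0$.

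For $B\neq 0$ we substitute $w=u+c$ and expand:
\[
A(u+c)^{q+1}=Au^{q+1}+Ac^qu+Acu^q+Ac^{q+1}.
\]
Under $\chi$, Frobenius invariance gives $\chi(Acu^q)=\chi(A^qc^qu)$. So the part of the argument linear in $u$ contributes $\chi\big(u(Tc^q+B)\big)$. We choose $c=-B^q/T$, so that $c^q=-B/T$ and this linear term vanishes. The sum then becomes the constant factor $\chi(Ac^{q+1}+Bc)$ times the norm sum $-q$ computed above.

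It remains to simplify the constant, which is the only step needing care. Using $c^{q+1}=N(B)/T^2$ and $Bc=-N(B)/T$, the constant is $\chi\big(AN(B)/T^2-N(B)/T\big)$. Since $N(B)/T\in\mathbb{F}_q$, we can rewrite $\chi(N(B)/T)=\chi\big(N(B)(A+A^q)/T^2\big)$. Substituting, the constant reduces to $\chi\big(-A^qN(B)/T^2\big)$. Applying Frobenius invariance once more, and using that $N(B)/T^2\in\mathbb{F}_q$, this equals $\chi\big(-AB^{q+1}/\operatorname{Tr}(A)^2\big)$. This yields the second case of Lemma~\ref{l3}.
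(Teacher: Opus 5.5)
Your proof is correct. The paper does not prove this lemma; it quotes it from \cite{r3,r4} and uses it as a black box. So there is no in-paper argument to compare against, and yours is a complete, self-contained derivation.

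All the steps check out:
\begin{enumerate}
\item The factorisation $\chi=\chi_1\circ\operatorname{Tr}$ turns $\chi(Aw^{q+1})$ into $\chi_1(\operatorname{Tr}(A)N(w))$, which immediately settles the case $\operatorname{Tr}(A)=0$.
\item Every fibre of $N:\mathbb{F}_{q^2}^*\to\mathbb{F}_q^*$ has size $q+1$, which gives the pure norm sum $-q$.
\item The shift $w=u-B^q/\operatorname{Tr}(A)$ kills the linear term, because $\chi(Acu^q)=\chi(A^qc^qu)$.
\item Your simplification of $\chi(Ac^{q+1}+Bc)$ to $\chi\bigl(-AB^{q+1}/\operatorname{Tr}(A)^2\bigr)$ is exact. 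It rests on the identity $N(B)/T=N(B)(A+A^q)/T^2$ together with Frobenius invariance.
\end{enumerate}

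Your argument never uses that $q$ is odd. In characteristic $2$, $\operatorname{Tr}(A)=0$ holds exactly when $A\in\mathbb{F}_q$, and the minus sign inside $\chi$ is immaterial. So the same computation also yields Lemma~\ref{l2}. Your approach therefore gives both Weil-sum evaluations used in the paper at once, whereas the paper relies on two separate citations.
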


\begin{definition}
    A nonzero polynomial $C(x) \in \mathbb{F}_{q^2}[x]$ of degree $n$ is called a self-conjugate reciprocal polynomial if
$$
x^n C^{(q)}\!\left(\frac{1}{x}\right) = \beta C(x)
$$
for some $\beta \in \mathbb{F}_{q^2}$. Here, $C^{(q)}$ denotes the polynomial obtained by raising each coefficient of $C(x)$ to its $q$-th power.
\end{definition}
A comprehensive account of SCR polynomials is given in \cite{pp3}.
\begin{lemma}\cite{r5}\label{l4}
    Let $q$ be even and $C(x) = \alpha x^2 + \beta x + \alpha^q$ 
be a self-conjugate reciprocal polynomial, where $\alpha \in \mathbb{F}_{q^2}$ and $\beta \in \mathbb{F}_q^*$. Let $\mu_{q+1}=\{x \in \mathbb{F}_{q^2}^* : x^{q+1}=1\}.
$ Then the following statements are equivalent:  

\begin{enumerate}
    \item $C(x)$ has at least one root in $\mu_{q+1}$.
    \item $C(x)$ has two distinct roots in $\mu_{q+1}$.
    \item $\operatorname{Tr}_{\mathbb{F}_{q}}\!\left(\dfrac{\alpha^{q+1}}{\beta^2}\right)=1.
$
\end{enumerate}
\end{lemma}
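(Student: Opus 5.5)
The plan is to reduce $C(x)$ to an Artin--Schreier quadratic over $\mathbb{F}_q$ and then compute norms of its roots directly.

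First I dispose of the degenerate case $\alpha=0$. Then $C(x)=\beta x$ has only the root $0\notin\mu_{q+1}$, and $\operatorname{Tr}_{\mathbb{F}_q}(0)=0\neq 1$. So all three statements are false, and they are equivalent. From now on I assume $\alpha\neq 0$, so $C$ is a genuine quadratic. The implication (2)$\Rightarrow$(1) is trivial, so it suffices to show (1)$\Rightarrow$(3) and (3)$\Rightarrow$(2).

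Next I substitute $x=\frac{\beta}{\alpha}y$ and multiply by $\alpha/\beta^2$. This turns $C(x)=0$ into
$$
y^2+y+c=0,\qquad c=\frac{\alpha^{q+1}}{\beta^2}\in\mathbb{F}_q^*,
$$
using that $\beta\in\mathbb{F}_q^*$ and $\alpha^{q+1}=N(\alpha)\in\mathbb{F}_q^*$. Since we are in characteristic $2$, the derivative of $y^2+y+c$ is $1$, so its two roots are always distinct. By the standard Artin--Schreier criterion over $\mathbb{F}_q$:
\begin{itemize}
\item if $\operatorname{Tr}_{\mathbb{F}_q}(c)=0$, both roots lie in $\mathbb{F}_q$;
\item if $\operatorname{Tr}_{\mathbb{F}_q}(c)=1$, both roots lie in $\mathbb{F}_{q^2}\setminus\mathbb{F}_q$ and are conjugate, i.e.\ $y^q=y+1$.
\end{itemize}
For a root $y$ with corresponding $x=\beta y/\alpha$, we have
$$
x^{q+1}=\frac{\beta^2\,y^{q+1}}{N(\alpha)}=\frac{y^{q+1}}{c}.
$$

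For (3)$\Rightarrow$(2), suppose $\operatorname{Tr}_{\mathbb{F}_q}(c)=1$. Then $y^{q+1}=y(y+1)=y^2+y=c$, where the last equality uses characteristic $2$. Hence $x^{q+1}=1$ for both roots, which gives two distinct roots of $C$ in $\mu_{q+1}$.

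For (1)$\Rightarrow$(3), I argue by contraposition. Suppose $\operatorname{Tr}_{\mathbb{F}_q}(c)=0$, so each root $y$ lies in $\mathbb{F}_q$ and $y^{q+1}=y^2$. Then $x\in\mu_{q+1}$ would force $y^2=c$. Combined with $y^2+y+c=0$ this gives $y=0$ and hence $c=0$, contradicting $c\neq 0$. So $C$ has no root in $\mu_{q+1}$, completing the cycle of implications.

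The only step needing care is the Artin--Schreier trace criterion and the identity $y^q=y+1$ in the irreducible case. This is standard: $y^q$ is the other root, and the two roots sum to $1$.

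One could alternatively use the self-conjugate reciprocal symmetry $x\mapsto x^{-q}$, which permutes the roots of $C$, but the direct norm computation above is more concrete.
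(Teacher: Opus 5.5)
Your proof is correct and complete. The normalization $x=\beta y/\alpha$ turns $C$ into $y^2+y+c$ with $c=\alpha^{q+1}/\beta^2\in\mathbb{F}_q^*$. The identity $x^{q+1}=y^{q+1}/c$ holds because $\beta^q=\beta$. Both directions check out: in the irreducible case $y^{q+1}=y(y+1)=c$, and in the split case $y^2=c$ forces $y=0$. Distinctness of the two roots follows from separability.

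The paper gives no proof of this lemma; it is imported from \cite{r5}. So there is no in-paper argument to compare with, and your proof serves as a self-contained justification.

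A common alternative splits the work differently. The equivalence (1)$\Leftrightarrow$(2) comes from the root pairing $x\mapsto x^{-q}$, or equivalently from the fact that the product of the roots is $\alpha^{q-1}\in\mu_{q+1}$, so one root lies in $\mu_{q+1}$ iff the other does. Only the link with (3) then needs an Artin--Schreier or M\"obius-parametrization computation. Your route settles everything in one computation. It also makes explicit the dichotomy ``two roots in $\mu_{q+1}$ or none'', which is exactly the form in which the lemma is used in Lemma~\ref{l3.3} and Theorem~\ref{p3.8}.

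The case $\alpha=0$ is already excluded by the self-conjugate reciprocal hypothesis, since $\beta x$ is not SCR. Treating it separately does no harm.
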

The following two lemmas help us in giving the inverse of the permutation polynomials we encounter in this paper.

\begin{lemma}\cite{j1}\label{l2.6}
Let $L(X) = \sum_{i=0}^{n-1} \alpha_i X^{q^i}$ be a $q$-linearized permutation polynomial over $\mathbb{F}_{q^n}$, and let $D_L$ be its associated Dickson matrix. Then
$$
L^{-1}(X) = (\det D_L)^{-1} \sum_{i=0}^{n-1} \bar{\alpha_i} X^{q^i},
$$
where $\bar{\alpha_i}$ is the $(i,0)$-th cofactor of the matrix $D_L$. The Dickson matrix $D_L$ is given by
$$
D_L =
\begin{pmatrix}
\alpha_0 & \alpha_{n-1}^q & \alpha_{n-2}^{q^2} & \cdots & \alpha_1^{q^{n-1}} \\
\alpha_1 & \alpha_0^q     & \alpha_{n-1}^{q^2} & \cdots & \alpha_2^{q^{n-1}} \\
\alpha_2 & \alpha_1^q     & \alpha_0^{q^2}     & \cdots & \alpha_3^{q^{n-1}} \\
\vdots   & \vdots         & \vdots             & \ddots & \vdots \\
\alpha_{n-1} & \alpha_{n-2}^q & \alpha_{n-3}^{q^2} & \cdots & \alpha_0^{q^{n-1}}
\end{pmatrix}.
$$
\end{lemma}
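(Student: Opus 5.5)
The plan is to turn the inversion of $L$ into the inversion of a matrix, using the map $L\mapsto D_L$ from $q$-linearized polynomials (reduced modulo $X^{q^n}-X$) to $n\times n$ matrices over $\mathbb{F}_{q^n}$.

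\emph{Step 1 (linear-algebra identity).} For $x\in\mathbb{F}_{q^n}$ put $\mathbf{x}=(x,x^q,\dots,x^{q^{n-1}})$. Raising $L(x)$ to the $q^j$-th power gives
$$L(x)^{q^j}=\sum_{i}\alpha_i^{q^j}x^{q^{i+j}}=\sum_{k}\alpha_{k-j}^{q^j}x^{q^k},$$
with indices taken mod $n$. Reading off the matrix $D_L$ in the statement, its $(k,j)$ entry is exactly $\alpha_{k-j}^{q^j}$. Hence
$$\bigl(L(x),L(x)^q,\dots,L(x)^{q^{n-1}}\bigr)=\mathbf{x}\,D_L .$$

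\emph{Step 2 (composition is matrix product).} Let $M(X)=\sum_i\beta_iX^{q^i}$. I will check, by the same index manipulation, that $D_{L\circ M}=D_MD_L$ (or $D_LD_M$; the order does not matter for the argument). I will also check that $L\mapsto D_L$ is injective, since the first column of $D_L$ is $(\alpha_0,\dots,\alpha_{n-1})^T$. Moreover, $D_X$ is the identity matrix. Thus $L\mapsto D_L$ is an (anti-)isomorphism of the ring of $q$-linearized polynomials mod $X^{q^n}-X$, which is $\mathrm{End}_{\mathbb{F}_q}(\mathbb{F}_{q^n})$, onto the set $\mathcal{D}$ of Dickson matrices. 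In particular $\mathcal{D}$ is closed under products.

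\emph{Step 3 (invertibility).} Suppose $L$ is a permutation. Then it has a compositional inverse $M$, which is again $q$-linearized, because the inverse of an $\mathbb{F}_q$-linear bijection is $\mathbb{F}_q$-linear and every such map is given by a reduced $q$-linearized polynomial. Step 2 then gives $D_MD_L=I$. So $\det D_L\neq0$ and $D_M=D_L^{-1}=(\det D_L)^{-1}\operatorname{adj}(D_L)$. The coefficients of $M$ form the first column of $D_M$. Therefore $\beta_i=(\det D_L)^{-1}\operatorname{adj}(D_L)_{i,0}$, and this adjugate entry is what is meant by the cofactor $\bar{\alpha_i}$ in the statement.

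The main obstacle is the bookkeeping in Step 2: proving $D_{L\circ M}=D_MD_L$ with the correct index shifts and Frobenius twists. The identity I need is
$$\sum_j \beta_{k-j}^{q^l}\,\alpha_{j-l}^{q^{j}}\;=\;\bigl(\text{coefficient of }X^{q^{k-l}}\text{ in }L\circ M\bigr)^{q^l}.$$
I will verify it by expanding
$$L(M(X))=\sum_{i,m}\alpha_i\beta_m^{q^i}X^{q^{i+m}}$$
and substituting. The indexing convention for cofactor versus adjugate entry is a minor issue. One can check it against $n=2$, which is the only case used later: $D_L=\begin{pmatrix}\alpha_0&\alpha_1^q\\ \alpha_1&\alpha_0^q\end{pmatrix}$, and the formula yields $L^{-1}(X)=(\alpha_0^{q+1}-\alpha_1^{q+1})^{-1}\bigl(\alpha_0^qX-\alpha_1X^q\bigr)$, which is verified directly by composing with $L$.
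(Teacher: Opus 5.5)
Your approach (encode $L$ by $D_L$, use that composition of $q$-polynomials corresponds to multiplication of Dickson matrices, and read $L^{-1}$ off the first column of $D_L^{-1}=(\det D_L)^{-1}\operatorname{adj}(D_L)$) is the standard argument behind this lemma. The paper itself gives no proof; it only quotes the result from the cited source. Steps 1 and 3 are fine.

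\emph{The identity in Step 2 is false as written.} You have interchanged the Frobenius exponents on $\alpha$ and $\beta$. The $(k,l)$ entry of $D_MD_L$ is $\sum_j\beta_{k-j}^{q^j}\alpha_{j-l}^{q^l}$. Substituting $j=i+l$ shows it equals $\gamma_{k-l}^{q^l}$, where $\gamma_s=\sum_i\alpha_i\beta_{s-i}^{q^i}$ is the coefficient of $X^{q^s}$ in $L\circ M$ reduced modulo $X^{q^n}-X$. Your version already fails for $n=2$ and $(k,l)=(1,0)$: it gives $\alpha_0\beta_1+\alpha_1^q\beta_0$ instead of $\alpha_0\beta_1+\alpha_1\beta_0^q$.

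You can skip the index chase entirely by using Step 1.
\begin{itemize}
\item For $M=L^{-1}$, apply Step 1 to $M$ and then to $L$. This gives $\mathbf{x}D_MD_L=\mathbf{x}$ for every $x\in\mathbb{F}_{q^n}$.
\item If $\mathbf{x}A=0$ for all $x$, then each coordinate $\sum_k A_{k,j}x^{q^k}$ is a polynomial of degree at most $q^{n-1}<q^n$ with $q^n$ roots, so $A=0$.
\end{itemize}
Hence $D_MD_L=I$ directly. The same argument gives $D_{L\circ M}=D_MD_L$ in general, so the anti-isomorphism comes for free.

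\emph{The cofactor indexing is not a minor matter.} For the matrix as displayed, $\operatorname{adj}(D_L)_{i,0}$ is the $(0,i)$-th cofactor (delete row $0$ and column $i$), not the $(i,0)$-th. Read literally for $n=2$, the statement gives $\bar{\alpha_0}=\alpha_0^q$ and $\bar{\alpha_1}=-\alpha_1^q$. Modulo $X^{q^2}-X$ this yields
$L(\alpha_0^qX-\alpha_1^qX^q)\equiv(\alpha_0^{q+1}-\alpha_1^2)X+\alpha_0(\alpha_1-\alpha_1^q)X^q$,
which is not $(\det D_L)X$ whenever $\alpha_0\neq0$ and $\alpha_1\notin\mathbb{F}_q$.

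The wording ``$(i,0)$-th cofactor'' is correct for the transposed Dickson matrix, whose first row is the coefficient vector. The statement appears to have transposed $D_L$ without adjusting the index. Your formula $\beta_i=(\det D_L)^{-1}\operatorname{adj}(D_L)_{i,0}$, and your explicit $n=2$ inverse, are the correct version. Present them as a correction of the indexing rather than as an interpretation.

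In the paper's only application, $L=cX+X^q$ with $c\in\mathbb{F}_q$, the two readings coincide. Both give $(c^2+1)^{-1}(cX+X^q)$.
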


\begin{lemma}\cite{j2}\label{l2.7}
Consider any polynomial $g \in \mathbb{F}_{q^n}[x]$, any additive polynomials $\phi,\psi \in \mathbb{F}_{q^n}[x]$, any $q$-polynomial $\overline{\psi} \in \mathbb{F}_{q^n}[x]$ satisfying $\phi \circ \psi = \overline{\psi} \circ \phi
$ and $|\psi(\mathbb{F}_{q^n})| = |\overline{\psi}(\mathbb{F}_{q^n})|$, and any polynomial $h \in \mathbb{F}_{q^n}[x]$ such that
$
h(\psi(\mathbb{F}_{q^n})) \subseteq \mathbb{F}_q \setminus \{0\}.
$
Let
$
f(x) = h(\psi(x))\,\phi(x) + g(\psi(x)) \text{ and } 
\bar{f}(x) = h(x)\phi(x) + \overline{\psi}(g(x)).
$ Assume that $f$ permutes $\mathbb{F}_{q^n}$, and further assume that $|S_{\psi}| = |S_{\overline{\psi}}|$ and
$\ker(\phi) \cap \psi(S_{\psi}) = \{0\}$. Then $\phi$ induces a bijection from $S_{\psi}$ to $S_{\overline{\psi}}$. Let $\bar{f}^{-1},\, \phi^{-1}\!\big|_{S_{\overline{\psi}}} \in \mathbb{F}_{q^n}[x]$ induce the inverses of $\bar{f}\!\big|_{\psi(\mathbb{F}_{q^n})}$ and $\phi\!\big|_{S_{\psi}}$, respectively. Then the compositional inverse of $f$ on $\mathbb{F}_{q^n}$ is given by
$$
f^{-1}(x)
=
\bar{f}^{-1}(\overline{\psi}(x))
+
\phi^{-1}\!\big|_{S_{\overline{\psi}}}
\left(
\frac{
x - \overline{\psi}(x) - g(\bar{f}^{-1}(\overline{\psi}(x))) + \overline{\psi}(g(\bar{f}^{-1}(\overline{\psi}(x))))
}
{h(\bar{f}^{-1}(\overline{\psi}(x)))}
\right).
$$
Furthermore, if $\phi$ induces a bijection from $\psi(\mathbb{F}_{q^n})$ to $\overline{\psi}(\mathbb{F}_{q^n})$, then $\phi$ is a permutation of $\mathbb{F}_{q^n}$ and the compositional inverse of $f$ on $\mathbb{F}_{q^n}$ is given by
$$
f^{-1}(x)
=
\phi^{-1}
\left(
\frac{
x - g(\bar{f}^{-1}(\overline{\psi}(x)))
}{
h(\bar{f}^{-1}(\overline{\psi}(x)))
}
\right).
$$
\end{lemma}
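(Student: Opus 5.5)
The plan is to build everything from one commutation identity and then undo $f$ in two stages: first recover $\psi(x)$, then recover the remaining part of $x$. Throughout I take $S_{\psi}=\{x-\psi(x): x\in\mathbb{F}_{q^n}\}$ and $S_{\overline{\psi}}=\{y-\overline{\psi}(y): y\in\mathbb{F}_{q^n}\}$. This is the reading suggested by the inverse formula, but it is my assumption, since the statement does not define these sets.

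\emph{Step 1 (commutation).} Since $\overline{\psi}$ is a $q$-polynomial, it is $\mathbb{F}_q$-linear. Also $h(\psi(x))\in\mathbb{F}_q^{*}$. Using $\overline{\psi}\circ\phi=\phi\circ\psi$, we get
$$\overline{\psi}(f(x))=h(\psi(x))\,\phi(\psi(x))+\overline{\psi}(g(\psi(x)))=\bar f(\psi(x)),$$
that is, $\overline{\psi}\circ f=\bar f\circ\psi$. Because $f$ is onto, $\bar f$ maps $\psi(\mathbb{F}_{q^n})$ onto $\overline{\psi}(\mathbb{F}_{q^n})$. These two sets have the same size, so this restriction of $\bar f$ is a bijection, and $\bar f^{-1}$ makes sense on $\overline{\psi}(\mathbb{F}_{q^n})$. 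Now fix $y=f(x)$ and put $z=\bar f^{-1}(\overline{\psi}(y))$. Then $z=\psi(x)$, and the defining relation gives
$$\phi(x)=\frac{y-g(z)}{h(z)}.$$

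\emph{Step 2 ($\phi$ is a bijection $S_{\psi}\to S_{\overline{\psi}}$).} Additivity together with the commutation relation gives $\phi\circ(\mathrm{id}-\psi)=(\mathrm{id}-\overline{\psi})\circ\phi$, so $\phi(S_{\psi})\subseteq S_{\overline{\psi}}$. For injectivity, let $s\in S_{\psi}$ with $\phi(s)=0$. Then $\phi(\psi(s))=\overline{\psi}(\phi(s))=0$, so $\psi(s)\in\ker\phi\cap\psi(S_{\psi})=\{0\}$. With both $\phi(s)=0$ and $\psi(s)=0$, we get $f(x+s)=f(x)$ for every $x$, and since $f$ is a permutation, $s=0$. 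Together with $|S_{\psi}|=|S_{\overline{\psi}}|$, this gives the bijection.

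\emph{Step 3 (assembling the inverse).} Apply $\mathrm{id}-\overline{\psi}$ to the expression for $\phi(x)$ from Step 1. Since $h(z)\in\mathbb{F}_q^*$ is fixed by the $\mathbb{F}_q$-linear map $\overline{\psi}$,
$$\phi(x-\psi(x))=\frac{y-\overline{\psi}(y)-g(z)+\overline{\psi}(g(z))}{h(z)}.$$
Here I use $\overline{\psi}(y-g(z))=\overline{\psi}(y)-\overline{\psi}(g(z))$. Inverting $\phi$ on $S_{\overline{\psi}}$ recovers $x-\psi(x)$. Adding $z=\psi(x)$ then gives exactly the stated formula for $f^{-1}$.

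\emph{Step 4 (the final claim).} Assume $\phi$ bijects $\psi(\mathbb{F}_{q^n})$ onto $\overline{\psi}(\mathbb{F}_{q^n})$. Let $\phi(x)=0$. Then $\phi(\psi(x))=\overline{\psi}(\phi(x))=0$, so $\psi(x)=0$ by injectivity on $\psi(\mathbb{F}_{q^n})$. This gives $f(x)=g(0)=f(0)$, hence $x=0$. So $\phi$ is an additive map with trivial kernel, and therefore a permutation. Inverting $\phi$ directly in the expression for $\phi(x)$ from Step 1 yields the second formula.

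I expect the main obstacle to be Step 2, namely extracting injectivity of $\phi$ on $S_{\psi}$ from the rather indirect hypothesis $\ker\phi\cap\psi(S_{\psi})=\{0\}$. The trick I would rely on is that an element killed by both $\phi$ and $\psi$ is a period of $f$. Getting the definition of $S_{\psi}$ right is the other delicate point.
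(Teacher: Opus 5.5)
The paper gives no proof of this lemma. It is quoted from the cited reference, and $S_{\psi}$, $S_{\overline{\psi}}$ are never defined in the paper, so there is no in-paper argument to compare against.

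Your proof is correct and complete under the reading $S_{\psi}=(\mathrm{id}-\psi)(\mathbb{F}_{q^n})$. That reading is the one the displayed formula forces: when the input is $f(x)$, the argument of $\phi^{-1}|_{S_{\overline{\psi}}}$ evaluates to $\phi(x-\psi(x))$, and its output must be $x-\psi(x)$. Your starting point, the square $\overline{\psi}\circ f=\bar f\circ\psi$, is the AGW-type mechanism behind results of this kind. Every hypothesis is used where it should be:
\begin{itemize}
\item $|\psi(\mathbb{F}_{q^n})|=|\overline{\psi}(\mathbb{F}_{q^n})|$ is used for the bijectivity of $\bar f$.
\item $|S_{\psi}|=|S_{\overline{\psi}}|$ and $\ker(\phi)\cap\psi(S_{\psi})=\{0\}$ are used for the bijection $S_{\psi}\to S_{\overline{\psi}}$.
\item The permutation property of $f$ is used in Steps 1, 2 and 4.
\end{itemize}
Your periodicity trick in Step 2 is sound: if $\phi(s)=\psi(s)=0$ then $f(x+s)=f(x)$.

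Two cosmetic fixes:
\begin{itemize}
\item In Step 2, say explicitly that $S_{\psi}$ is an additive subgroup, being the image of the additive map $\mathrm{id}-\psi$. That is what makes a trivial kernel of $\phi|_{S_{\psi}}$ equivalent to injectivity.
\item In Step 3, the fact you need is not that $h(z)$ is ``fixed by'' $\overline{\psi}$. It is that $\overline{\psi}(a u)=a\,\overline{\psi}(u)$ for the scalar $a=h(z)^{-1}\in\mathbb{F}_q$.
\end{itemize}
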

\section{Main Results}
\subsection*{Notation}

Throughout this section, we use the following notation:

\begin{itemize}
    \item $\psi$ denotes the canonical additive character of $\mathbb{F}_q$.
    
    \item $\chi$ denotes the canonical additive character of $\mathbb{F}_{q^2}$, defined by $\chi = \psi \circ \operatorname{Tr}$, where $\operatorname{Tr} : \mathbb{F}_{q^2} \to \mathbb{F}_q$ is the trace map given by $
    \operatorname{Tr}(x):=x^q+x.$
    
    \item $\eta$ denotes the quadratic character of $\mathbb{F}_q^{*}$.
    
    \item $\eta'$ denotes the quadratic character of $\mathbb{F}_{q^2}^{*}$, defined by $
    \eta'(x)=\eta(x^{q+1}).$
    \item $\mu_{q+1}:=\{x\in \mathbb{F}_{q^2}^*: x^{q+1}=1\}$, the set of $(q+1)$-th roots of unity.
    \item $\operatorname{Tr}_{\mathbb{F}_q}$
denotes the absolute trace function from $\mathbb{F}_{q}$ to $\mathbb{F}_{2}$.
\end{itemize}
We first explicitly determine the number of zeros of the polynomials under consideration. In particular, Lemmas~\ref{l5} and \ref{l6} are employed to analyze the polynomial $x^q + b x^2 + c x + d$ for odd $q$.
\begin{lemma}\label{l5}
Let $q=p^k$ and let $c,d \in \mathbb{F}_{q^2}$. Then $M$, the number of zeroes of $
x^q+cx+d$
in $\mathbb{F}_{q^2}$ is given by
$$
M=\begin{cases}
1, & \text{if } c \notin \mu_{q+1},\\[1mm]
q, & \text{if } c \in \mu_{q+1} \text{ and } d=cd^q,\\[1mm]
0, & \text{if } c \in \mu_{q+1} \text{ and } d \neq cd^q.
\end{cases}
$$
\end{lemma}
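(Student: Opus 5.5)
The map $L(x)=x^q+cx$ is $\mathbb{F}_q$-linear on $\mathbb{F}_{q^2}$, so I would argue directly with linear algebra rather than with Weil sums. The zeros of $x^q+cx+d$ are exactly the solutions of $L(x)=-d$. There are therefore two possibilities: the solution set is empty, or it is a coset of $\ker L$, in which case $M=|\ker L|$. The proof then has two steps: compute $\ker L$, and decide when $-d$ lies in the image of $L$.

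\textbf{The kernel.} A nonzero $x$ lies in $\ker L$ precisely when $x^{q-1}=-c$. Since $x^{q-1}$ always lies in $\mu_{q+1}$, if $-c\notin\mu_{q+1}$ then $\ker L=\{0\}$. In that case $L$ is a bijection of $\mathbb{F}_{q^2}$ and $M=1$.

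Now note that $(-1)^{q+1}=1$ in every characteristic. So $-c\in\mu_{q+1}$ if and only if $c\in\mu_{q+1}$, which matches the case split in the statement.

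If $-c\in\mu_{q+1}$, then $-c$ is a $(q-1)$-th power, because the map $x\mapsto x^{q-1}$ sends $\mathbb{F}_{q^2}^*$ onto $\mu_{q+1}$. Its preimage is a coset of $\mathbb{F}_q^*$, of size $q-1$. Adding $0$ gives $|\ker L|=q$, so the image of $L$ is an $\mathbb{F}_q$-subspace of dimension $1$, of size $q$.

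\textbf{The image when $c\in\mu_{q+1}$.} I claim that
\[
\operatorname{Im}L=\{y: y=cy^q\}.
\]
For the containment, take $y=x^q+cx$. Then
\[
cy^q=c\,x^{q^2}+c\,c^q x^q = cx+x^q = y,
\]
using $x^{q^2}=x$ and $c^{q+1}=1$.

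For equality, observe that $y\mapsto y-cy^q$ is itself $\mathbb{F}_q$-linear of the same shape, namely $-(c y^q - y)$. Its kernel, the set of $y$ with $y=cy^q$, consists of $0$ together with the $y\neq 0$ satisfying $y^{q-1}=c^{-1}$. Since $c^{-1}\in\mu_{q+1}$, the same count as above gives exactly $q$ elements. Both sets have size $q$ and one contains the other, so they are equal.

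\textbf{Conclusion.} The condition $-d\in\operatorname{Im}L$ is $-d=c(-d)^q$. In odd characteristic this becomes $d=cd^q$ after multiplying by $-1$; in even characteristic it is literally $d=cd^q$. So when $c\in\mu_{q+1}$ we get $M=q$ if $d=cd^q$ and $M=0$ otherwise, as stated.

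The only delicate point is keeping track of the sign from $-c$ and $-d$, which is harmless because $(-1)^q=-1$ and $(-1)^{q+1}=1$. One could also derive the result via the character-sum formula for $M$, but the linear-algebra argument is cleaner.
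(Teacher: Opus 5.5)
Your proof is correct, and it takes a different route from the paper. The paper counts zeros with the orthogonality relation $Mq^2=\sum_{u}\sum_{w}\chi(uf(w))$. It uses $\chi(uw^q)=\chi(u^qw)$ to rewrite the inner sum as $\sum_w\chi((u^q+cu)w)$, which reduces the count to $M=1+\sum_{u\in N\setminus\{0\}}\chi(du)$ with $N=\ker(u\mapsto u^q+cu)=\alpha\mathbb{F}_q$. The answer then depends on whether $\operatorname{Tr}(d\alpha)=0$, and by $\alpha^q=-c\alpha$ this is equivalent to $d=cd^q$. In linear-algebra terms, the paper is implicitly using that $L(x)=x^q+cx$ is self-adjoint for the trace form $\operatorname{Tr}(xy)$. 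Hence $\operatorname{Im}L=(\ker L)^{\perp}$, and solvability of $L(x)=-d$ is tested by pairing $d$ against the kernel. You instead identify $\operatorname{Im}L$ directly as the kernel of $y\mapsto y-cy^q$, via the containment check $cy^q=y$ and a cardinality count. You then conclude that the solution set is either empty or a coset of $\ker L$. Your argument is more elementary and self-contained, explains structurally why $M\in\{0,1,q\}$, and handles the signs in both characteristics cleanly. The paper's character-sum computation serves as a warm-up for the template used in its later lemmas. There, terms such as $bx^2$ (in odd characteristic) or $x^{q+1}$ destroy additivity, and the inner sums become genuine Weil sums that a purely linear-algebraic argument cannot handle.
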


\begin{proof}
\begin{align*}
Mq^2
&= \sum_{w \in \mathbb{F}_{q^2}} \sum_{u \in \mathbb{F}_{q^2}} \chi\big(u f(w)\big) \\
&= \sum_{w \in \mathbb{F}_{q^2}} \sum_{u \in \mathbb{F}_{q^2}} \chi(uw^q + cuw)\chi(du) \\
&= \sum_{w \in \mathbb{F}_{q^2}} \sum_{u \in \mathbb{F}_{q^2}} \chi\big((u^q+cu)w\big)\chi(du) 
\quad \text{(since $\chi(u)=\chi(u^q)$ for all $u \in \mathbb{F}_{q^2}$)} \\
&= q^2 + \sum_{u \in \mathbb{F}_{q^2}^{*}} \chi(du)\sum_{w \in \mathbb{F}_{q^2}} \chi\big((u^q+cu)w\big).
\end{align*}

First consider the case $c \notin \mu_{q+1}$. Then for any $u \neq 0$, we have
$u^q+cu \neq 0,$
and hence $Mq^2=q^2.$

Now suppose that $c \in \mu_{q+1}$, and let $N$ be the set of zeros of the $q$-polynomial $u^q+cu$ in $\mathbb{F}_{q^2}$. Then $N$ is a $\mathbb{F}_{q}$-subspace of $\mathbb{F}_{q^2}$ of dimension $1$, i.e.,
$N=\{\alpha u : u \in \mathbb{F}_q\},
$ where $\alpha \in \mathbb{F}_{q^2}^{*}$ satisfies
$ \alpha^q+c\alpha=0.$

Again,
\begin{align*}
Mq^2
&= q^2 + \sum_{u \in \mathbb{F}_{q^2}^{*}\cap N} \chi(du)\sum_{w \in \mathbb{F}_{q^2}} 1
   + \sum_{u \in \mathbb{F}_{q^2}^{*}\setminus N} \chi(du)\sum_{w \in \mathbb{F}_{q^2}} \chi\big((u^q+cu)w\big) \\
&= q^2 + q^2 \sum_{u \in N\setminus\{0\}} \chi(du) \\
&= q^2 + q^2 \sum_{u \in \mathbb{F}_q^{*}} \chi(d\alpha u) \\
&= q^2 + q^2 \sum_{u \in \mathbb{F}_q^{*}} \psi\big(u\,\operatorname{Tr}(d\alpha)\big) \\
&=
\begin{cases}
q^2+q^2(q-1), & \text{if } \operatorname{Tr}(d\alpha)=0,\\[1mm]
q^2-q^2, & \text{if } \operatorname{Tr}(d\alpha)\neq 0.
\end{cases}
\end{align*}

Now, since $\alpha^q=-c\alpha$, we have $\operatorname{Tr}(d\alpha)=0
\quad \text{if and only if} \quad
d=cd^q.$
Therefore, the result follows.
\end{proof}

\begin{lemma}\label{l6}
Let $q$ be odd, $b,d \in \mathbb{F}_{q^2}$ with $b \neq 0$, and $\alpha$ be a primitive element of $\mathbb{F}_{q^2}$. Then the number of zeros of
$x^q+bx^2+d$ in $\mathbb{F}_{q^2}$ is
$$
1-\eta(-b^{q+1}) \sum_{i \in K} (-1)^i,
$$
where
$K=\{\, i \mid 0 \leq i \leq q,\ h(\alpha^{i(q-1)})=0 \,\},
$ and $h(x)=b^q x^3-4b^{q+1}d^q x^2-4b^{q+1}d x+b$.

\end{lemma}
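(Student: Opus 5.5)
The plan is to count zeros with the character-sum identity from the Introduction, evaluate the inner sum with Lemma~\ref{l1} (taking $A=0$), and then reduce what remains to a finite sum over the cosets of $\mathbb{F}_q^*$ in $\mathbb{F}_{q^2}^*$.

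\emph{Step 1: the inner sum.} Write $f(x)=x^q+bx^2+d$ and let $M$ be its number of zeros. Then
$$Mq^2=\sum_{u\in\mathbb{F}_{q^2}}\chi(du)\sum_{w\in\mathbb{F}_{q^2}}\chi(uw^q+ubw^2).$$
The term $u=0$ contributes $q^2$. For $u\neq 0$, I use $\chi(uw^q)=\chi(u^qw)$, so the inner sum becomes $\sum_w\chi(ubw^2+u^qw)$. This is the sum of Lemma~\ref{l1} with $A=0$, $B=ub\neq0$ and $C=u^q$. Here $D=-4N(ub)\neq0$, and $\theta=-u^q/(2ub)$ is the root of $2ubx+u^q=0$. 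Lemma~\ref{l1} then gives
$$\sum_w\chi(ubw^2+u^qw)=-\eta(-b^{q+1})\,\eta(u^{q+1})\,q\,\chi\!\left(-\frac{u^{2q-1}}{4b}\right).$$
Consequently
$$Mq^2=q^2-q\,\eta(-b^{q+1})\,S,\qquad S=\sum_{u\neq0}\eta(u^{q+1})\,\chi\!\left(du-\frac{u^{2q-1}}{4b}\right).$$

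\emph{Step 2: coset decomposition.} Write each $u\in\mathbb{F}_{q^2}^*$ uniquely as $u=rz$ with $r\in\mathbb{F}_q^*$ and $z=\alpha^i$, $0\le i\le q$. Since $r^q=r$, we have $u^{2q-1}=r\,z^{2q-1}$. Also $\eta(u^{q+1})=\eta(r^2\alpha^{i(q+1)})=(-1)^i$, because $\alpha^{q+1}$ generates $\mathbb{F}_q^*$ and is therefore a nonsquare. Put $T_z=dz-z^{2q-1}/(4b)$. Then
$$S=\sum_{i=0}^{q}(-1)^i\sum_{r\in\mathbb{F}_q^*}\psi\big(r\operatorname{Tr}(T_{z})\big).$$
The inner sum equals $q-1$ if $\operatorname{Tr}(T_z)=0$ and $-1$ otherwise. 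Since $q+1$ is even, $\sum_{i=0}^q(-1)^i=0$, and hence
$$S=q\sum_{i:\,\operatorname{Tr}(T_{\alpha^i})=0}(-1)^i.$$
Substituting this into Step 1 gives $M=1-\eta(-b^{q+1})\sum(-1)^i$, where the sum runs over those $i$.

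\emph{Step 3: identifying the index set with $K$.} Set $y=z^{q-1}=\alpha^{i(q-1)}\in\mu_{q+1}$. Dividing $\operatorname{Tr}(T_z)$ by $z\neq0$, and using $z^{2q-2}=y^2$ and $z^{1-q}=y^{-1}$, gives
$$d+d^qy-\frac{y^2}{4b}-\frac{1}{4b^qy}.$$
Multiplying by $-4b^{q+1}y\neq0$ produces exactly $b^qy^3-4b^{q+1}d^qy^2-4b^{q+1}dy+b=h(y)$. So $\operatorname{Tr}(T_{\alpha^i})=0$ if and only if $h(\alpha^{i(q-1)})=0$, i.e.\ $i\in K$, which proves the formula.

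The main obstacle is keeping the signs and the quadratic-character factors straight. Specifically, three points need care:
\begin{enumerate}
\item $\eta(-4N(ub))$ must factor as $\eta(-b^{q+1})\,\eta(u^{q+1})$, which uses that $4$ is a square.
\item $\eta(u^{q+1})$ must depend only on the coset of $u$, through the parity of $i$.
\item The $-1$ contributions from the cosets with $\operatorname{Tr}(T_z)\neq0$ must cancel; this rests on $\sum_{i=0}^{q}(-1)^i=0$.
\end{enumerate}
Note that the $q+1$ values $\alpha^{i(q-1)}$ are distinct, since $\alpha^{q-1}$ has order $q+1$, so indexing $K$ by $i$ is consistent.
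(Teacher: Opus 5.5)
Your proposal is correct and follows the paper's proof essentially step for step: you apply Lemma~\ref{l1} with $A=0$, decompose $u=\alpha^i r$ with $\eta(u^{q+1})=(-1)^i$, and reduce the trace condition to $h(\alpha^{i(q-1)})=0$. You also state explicitly the cancellation $\sum_{i=0}^{q}(-1)^i=0$, which the paper uses without comment when it simplifies the bracketed expression to $q^2\sum_{i\in K}(-1)^i$.
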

\begin{proof}
Let $M$ denote the number of zeros of $f(x)=x^q+bx^2+d$ in $\mathbb{F}_{q^2}$. Then

\begin{align*}
Mq^2 
&= \sum_{u \in \mathbb{F}_{q^2}} \sum_{w \in \mathbb{F}_{q^2}} \chi\big(u f(w)\big) \\
&= \text{ 
}q^2 + \sum_{u \in \mathbb{F}_{q^2}^{*}} \chi(du) \sum_{w \in \mathbb{F}_{q^2}} \chi\big(b u w^2 + u^q w\big).
\end{align*}

Applying Lemma \ref{l1} to the inner sum, we have $D=-4b^{q+1}u^{q+1}=0$ if and only if $u=0$. Hence, for $u \neq 0$,  

\begin{align*}
\sum_{w \in \mathbb{F}_{q^2}} \chi\big(b u w^2 + u^q w\big) 
&= -\eta(-4 b^{q+1} u^{q+1}) \, q \, \chi\big(-b u \theta_u^2\big) \\
&= -\eta(-b^{q+1}) \, q \, \chi\left(-\frac{u^{2q-1}}{4b}\right) \eta'(u),
\end{align*}
where $\theta_u=-\dfrac{u^q}{2bu}$.
 Therefore,
\begin{align*}
Mq^2
&= q^2-q\eta(-b^{q+1})\sum_{u \in \mathbb{F}_{q^2}^{*}} \chi\left(\frac{4bdu-u^{2q-1}}{4b}\right)\eta'(u).
\end{align*}

Now every nonzero element of $\mathbb{F}_{q^2}$ can be written in the form $\alpha^i u$, where $0 \leq i \leq q$ and $u \in \mathbb{F}_q^{*}$. Also,
$
\eta'(\alpha^i u)=\eta'(\alpha)^i.
$
Since $\alpha$ is a primitive element of $\mathbb{F}_{q^2}$, we have
$
\eta'(\alpha)=-1.
$
Hence,
\begin{align*}
Mq^2
&= q^2-\eta(-b^{q+1})q\sum_{i=0}^{q}(-1)^i\sum_{u \in \mathbb{F}_q^{*}}
\chi\left(\frac{4bd\alpha^i-(\alpha^i)^{2q-1}}{4b}u\right) \\
&= q^2-\eta(-b^{q+1})q\sum_{i=0}^{q}(-1)^i\sum_{u \in \mathbb{F}_q^{*}}
\psi\left(u\,\operatorname{Tr}\left(\frac{4bd\alpha^i-(\alpha^i)^{2q-1}}{4b}\right)\right).
\end{align*}
Let $K\subseteq\{0,1,\dots,q\}$ such that
$
\operatorname{Tr}\left(\dfrac{4bd\alpha^i-(\alpha^i)^{2q-1}}{4b}\right)=0
\quad \text{if and only if} \quad i \in K.
$
This gives
$$
\sum_{u \in \mathbb{F}_q^{*}} \psi\left(u\,\operatorname{Tr}\left(\frac{4bd\alpha^i-(\alpha^i)^{2q-1}}{4b}\right)\right)
=
\begin{cases}
q-1, & \text{if } i \in K,\\
-1, & \text{if } i \notin K.
\end{cases}
$$

Hence,
\begin{align*}
Mq^2
&= q^2-\eta(-b^{q+1})q\left[\sum_{i \in K}(-1)^i(q-1)-\sum_{i \notin K}(-1)^i\right] \\
&= q^2-\eta(-b^{q+1})q^2\sum_{i \in K}(-1)^i.
\end{align*}

Furthermore,
$$
\operatorname{Tr}\left(\frac{4bd\alpha^i-(\alpha^i)^{2q-1}}{4b}\right)=0
$$
if and only if
$$
4b^{q+1}d\alpha^i-(\alpha^i)^{2q-1}b^q+4b^{q+1}d^q\alpha^{iq}-(\alpha^i)^{2-q}b=0,
$$
which is equivalent to
$$
4b^{q+1}d-b^q x^2+4b^{q+1}d^q x-bx^{-1}=0,
$$
where $x=\alpha^{i(q-1)}$. Hence the result follows.
\end{proof}
In Lemma~\ref{l6}, one might question how the evaluation of the character sum differs from that in \cite{r1}. We note that in \cite{r1}, in all cases the discriminant is of the form $D = (u^q - u)^2$, which implies that $D \neq 0$ if and only if $u \in \mathbb{F}_{q^2} \setminus \mathbb{F}_q$, and in this case $\eta'(D) = 1$ always. 

However, in our setting, $D = 0$ if and only if $u = 0$. Consequently, the evaluation of $\eta'(D)$ for $u \in \mathbb{F}_{q^2} \setminus \mathbb{F}_q$ presents additional difficulties. To overcome this, we employ a different representation of $\mathbb{F}_{q^2}$ than that used in \cite{r1}.

In Lemmas~\ref{l3.3}-\ref{l7}, we consider the case where $q$ is even for two different polynomials, while in Lemma~\ref{l3.3} we treat the case where $q$ is odd. Each problem is addressed using number theoretic arguments or purely character sum estimates.

\begin{lemma}\label{l3.3}
Let $q=2^k$ and $b,c \in \mathbb{F}_q$ with $b \neq 0$. Let $\chi$ be the canonical additive character of $\mathbb{F}_{q^2}$. Let $M$ denote the number of zeroes of 
$
f(x)=x^q+bx^2+cx+d \in \mathbb{F}_{q^2}[x]
$ in $\mathbb{F}_{q^2}$. Then the following hold

\begin{enumerate}
\item If $c=0$ and $k$ is even, then
$$
M=1+\chi(dx_1),
$$
where $x_1$ is the unique root of 
$
x^{2q-1}=b
$ 
in $\mathbb{F}_{q^2}$ 

\item If $c=0$, and $k$ is odd, then
$$
M
=
1+\chi(dx_1)+\chi(dx_2)+\chi(dx_3),
$$
where $x_1,x_2,x_3$ are the three distinct roots of $x^{2q-1}=b$ in $\mathbb{F}_{q^2}$ .
\item If $c=1$, then
$
M=1.
$

\item If $c \in \mathbb{F}_q^* \setminus \{1\}$ with
$
\operatorname{Tr}_{\mathbb{F}_q/\mathbb{F}_2}\!\left(\dfrac{1}{(c+1)^4}\right)=0,
$
then
$$
M=1+\chi\!\left(\dfrac{db}{(c+1)^2}\right).
$$

\item If $c \in \mathbb{F}_q^* \setminus \{1\}$ with
$
\operatorname{Tr}_{\mathbb{F}_q/\mathbb{F}_2}\!\left(\dfrac{1}{(c+1)^4}\right)=1,
$
then
$$
M
=
1
+
\chi\!\left(\frac{db}{(x_1+c)^2}\right)
+
\chi\!\left(\frac{db}{(x_2+c)^2}\right)
+
\chi\!\left(\frac{db}{(c+1)^2}\right),
$$
where $x_1,x_2$ are the two distinct roots of
$
x^2+(c+1)^2x+1=0
$ in $\mu_{q+1}\setminus\{1\}$.

\end{enumerate}
\end{lemma}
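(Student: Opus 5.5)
The plan is to count zeros with the additive-character identity and reduce the inner Weil sum to the degenerate case of Lemma~\ref{l2.3}. I would write
$$
Mq^2=\sum_{u\in\mathbb{F}_{q^2}}\sum_{w\in\mathbb{F}_{q^2}}\chi\big(uf(w)\big)=q^2+\sum_{u\in\mathbb{F}_{q^2}^*}\chi(du)\sum_{w\in\mathbb{F}_{q^2}}\chi\big(buw^2+(u^q+cu)w\big),
$$
using $\chi(uw^q)=\chi(u^qw)$. By Lemma~\ref{l2.3} with $A=bu\neq 0$ and $B=u^q+cu$, the inner sum is $q^2$ if $bu=(u^q+cu)^2$ and $0$ otherwise. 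Hence
$$
M=1+\sum_{u\in S}\chi(du),\qquad S=\{u\in\mathbb{F}_{q^2}^*:\ u^{2q-1}+c^2u=b\},
$$
where I divided $bu=u^{2q}+c^2u^2$ by $u$. Everything then reduces to describing $S$ explicitly in each case.

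\emph{Case $c=0$.} Here $S$ is the solution set of $u^{2q-1}=b$. Since $4(q^2-1)=(2q-1)(2q+1)-3$ and $2q-1$ is odd, $\gcd(2q-1,q^2-1)$ divides $3$. It equals $3$ exactly when $3\mid 2q-1$, i.e.\ $q\equiv 2\pmod 3$, i.e.\ $k$ is odd.
\begin{itemize}
\item For $k$ even, $u\mapsto u^{2q-1}$ is a bijection of $\mathbb{F}_{q^2}^*$, so there is a unique root $x_1$. This gives (1).
\item For $k$ odd, solutions exist because $b\in\mathbb{F}_q^*$ satisfies $b^{(q^2-1)/3}=(b^{q-1})^{(q+1)/3}=1$. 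So $b$ is a cube in the appropriate sense and there are exactly three roots $x_1,x_2,x_3$. This gives (2).
\end{itemize}

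\emph{Case $c\neq 0$.} I would parametrize by $v=u^{q-1}\in\mu_{q+1}$. Then $u^{2q-1}=uv^2$, so the equation becomes $u(v+c)^2=b$, that is, $u=b/(v+c)^2$, provided $v\neq c$. Substituting back and using $v^q=v^{-1}$ and $b^{q-1}=1$ (since $b\in\mathbb{F}_q$), the consistency condition $v=u^{q-1}$ becomes
$$
v=\frac{(v+c)^2}{(v^{-1}+c)^2}=\frac{v^2(v+c)^2}{(1+cv)^2}.
$$
This simplifies to
$$
v^3+c^2v^2+c^2v+1=(v+1)\big(v^2+(c+1)^2v+1\big)=0.
$$
Conversely, each root $v\in\mu_{q+1}$ with $v\neq c$ gives exactly one $u\in S$, and distinct $v$ give distinct $u$. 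So $S$ is in bijection with the roots of this cubic in $\mu_{q+1}\setminus\{c\}$.

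The remaining bookkeeping goes as follows.
\begin{itemize}
\item If $c=1$, the cubic is $(v+1)^3$. Its only root is $v=1=c$, which is excluded, so $S=\emptyset$ and $M=1$. This gives (3).
\item If $c\neq 1$, then $c\notin\mu_{q+1}$ because $\mathbb{F}_q^*\cap\mu_{q+1}=\{1\}$, so the restriction $v\neq c$ is vacuous. The root $v=1$ always contributes $u=b/(c+1)^2$, which produces the term $\chi(db/(c+1)^2)$.
\item The quadratic $v^2+(c+1)^2v+1$ is self-conjugate reciprocal with $\alpha=1$ and $\beta=(c+1)^2\in\mathbb{F}_q^*$. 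By Lemma~\ref{l4}, it has roots in $\mu_{q+1}$ (then two distinct ones) iff $\operatorname{Tr}_{\mathbb{F}_q}\big(1/(c+1)^4\big)=1$. Neither root is $1$, since $v=1$ would force $(c+1)^2=0$. These roots $x_1,x_2$ contribute $u=b/(x_i+c)^2$, which gives (5). When the trace is $0$, only $v=1$ survives, which gives (4).
\end{itemize}

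I expect the main obstacle to be the reduction from $u$ to $v=u^{q-1}$: checking carefully that it is a bijection onto the cubic's roots in $\mu_{q+1}$, including the excluded value $v=c$. The factorization of the cubic and the appeal to Lemma~\ref{l4} are then routine.
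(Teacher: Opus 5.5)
Your proposal is correct and follows the paper's proof essentially step for step: the same reduction via Lemma~\ref{l2.3} to $M=1+\sum_{u\in S}\chi(du)$, the same $\gcd(2q-1,q^2-1)$ analysis for $c=0$, and the same substitution $v=u^{q-1}$ leading to $(v+1)\bigl(v^2+(c+1)^2v+1\bigr)=0$ together with an appeal to Lemma~\ref{l4}. The one cosmetic difference is the case $c=1$: the paper treats it separately, noting that $\operatorname{Tr}(u)^2=bu$ forces $u\in\mathbb{F}_q$ and hence $u=0$, whereas you absorb it into the same parametrization, where the cubic becomes $(v+1)^3$ and its only root $v=1=c$ is impossible because $b\neq 0$. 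You also make explicit the converse direction of the correspondence between $S$ and the roots, which the paper leaves implicit.
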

\begin{proof}
We begin with
$$
Mq^2
=
q^2+\sum_{u\in \mathbb{F}_{q^2}^*}\chi(du)\sum_{w\in \mathbb{F}_{q^2}}
\chi\!\left(buw^2+(u^q+cu)w\right).
$$
By Lemma~\ref{l2.3}, the inner sum equals $q^2$ if
$
u^{2q}+c^2u^2+bu=0,
$
and equals $0$ otherwise. Hence, if we define
$$
H=\left\{u\in \mathbb{F}_{q^2}^* : u^{2q}+c^2u^2+bu=0\right\},
$$
then
$$
M=1+\sum_{u\in H}\chi(du).
$$
Therefore, our next goal is to study the set $H$.
First suppose that $c=0$. Then we need to determine the solutions of
$
u^{2q-1}=b
$
in $\mathbb{F}_{q^2}$. Let $\alpha$ be a primitive element of $\mathbb{F}_{q^2}$ and write
$
b=\alpha^s
$
for some integer $s$. We need to determine the number of integers $r$ with
$
0 \leq r \leq q^2-2
$
such that $u=\alpha^r$ satisfies
$
\alpha^{r(2q-1)}=\alpha^s,
$
i.e.,
$
r(2q-1)\equiv s \pmod{q^2-1}.
$ This congruence has a solution if and only if
$
\gcd(2q-1,q^2-1)\mid s,
$
and the number of such integers $r$ is exactly
$
\gcd(2q-1,q^2-1).
$
Since $q=2^k$ with $k$ even, we have
$
\gcd(2q-1,q^2-1)=1\mid s.
$
Hence for every $b\in \mathbb{F}_{q^2}^*$, the equation
$
u^{2q-1}=b
$
has a unique solution $x_1\in \mathbb{F}_{q^2}^*$. Similarly, when $k$ is odd, we have
$
\gcd(2q-1,q^2-1)=3.
$
Since $\mathbb{F}_q^*=\langle \alpha^{q+1}\rangle \subseteq \langle \alpha^3\rangle,
$
it follows that for every $b\in \mathbb{F}_q^*$, the equation
$
u^{2q-1}=b
$
has exactly three distinct solutions $x_1,x_2,x_3\in \mathbb{F}_{q^2}.
$

Now when $c=1$, we examine the number of zeroes of 
$
u^{2q}+u^2+bu
$ in $\mathbb{F}_{q^2}$.
Let
$
r=bu$. Therefore
$ u^{2q}+u^2=\operatorname{Tr}(u)^2=r\in\mathbb{F}_{q}$ . Substituting $u=r/b$, we obtain
$
\operatorname{Tr}\!\left(\dfrac{r}{b}\right)^2=r.
$
Since $r\in \mathbb{F}_q$, this yields
$
r=0.
$
Hence $u=0$ and consequently $H=\phi$. 

Now, let $c\in \mathbb{F}_q^*\setminus\{1\}$. Then $u^{2q-1}+c^2u+b=0 
\iff u^{2(q-1)}u+c^2u+b=0$. Let $u^{q-1}=r\in \mu_{q+1}$. We obtain $(r+c)^2u=b$ and since $c\in \mathbb{F}_q^*\setminus\{1\}$, we obtain
\begin{align}\label{eq1}
u=\frac{b}{(r+c)^2}.
\end{align}

Equation $\ref{eq1}$ together with the fact that $u^{q-1}=r$, yields
\begin{align*}
&r^3+c^2r^2+c^2r+1=0\\
\implies& (r+1)\bigl(r^2+(c+1)^2r+1\bigr)=0. 
\end{align*}
Notice that
$
r^2+(c+1)^2r+1\neq 0
$
when $r=1$. Moreover, by Lemma \ref{l4}, the polynomial
$
r^2+(c+1)^2r+1
$
has two distinct roots $x_1,x_2\in \mu_{q+1}\setminus \{1\}$ if
$
\operatorname{Tr}_{\mathbb{F}_q/\mathbb{F}_2}\left(\dfrac{1}{(c+1)^4}\right)=1,
$
and has no root in $\mu_{q+1}\setminus\{1\}$ otherwise. Since
$
u=\dfrac{b}{(r+c)^2},
$
the possible values of $u$ satisfying $u^{2q-1}+c^2u+b=0$ are
$$
\frac{b}{(c+1)^2}, \quad \frac{b}{(x_1+c)^2}, \quad \frac{b}{(x_2+c)^2}.
$$
Therefore, the result follows.
\end{proof}    

\begin{lemma}\label{l7}
Let $q$ be even, and let $\alpha$ be a primitive element of
$\mathbb{F}_{q^2}$. Then $M$, the number of zeros of
$f(x)=x^{q+1}+bx^q+cx+d\in\mathbb{F}_{q^2}[x]$
in $\mathbb{F}_{q^2}$ is given by
\[
M=
\begin{cases}
1, & b=c=0,\ d=0,\\[2mm]
q+1, & b=c=0,\ d\in\mathbb{F}_q^*,\\[2mm]
0, & b=c=0,\ d\notin\mathbb{F}_q,\\[2mm]
1, & b^q=c\neq0,\ d\in\mathbb{F}_q,\ d=c^{q+1},\\[2mm]
q+1, & b^q=c\neq0,\ d\in\mathbb{F}_q,\ d\neq c^{q+1},\\[2mm]
0, & b^q=c\neq0,\ d\notin\mathbb{F}_q,\\[2mm]
1-|N'|, & b^q\neq c,\ b^{q+1}=c^{q+1},\
b^qd+cd^q=0,\\[2mm]
2-|N'|, & b^q\neq c,\ b^{q+1}=c^{q+1},\
b^qd+cd^q\neq0,\\[2mm]
2-|N|, & b^{q+1}\neq c^{q+1}.
\end{cases}
\]
Here
\[
N=\left\{i|\,1\leq i\leq q,
h\left(\alpha^{i(q-1)}\right)=0\,\right\},
\]
\[
N'=\left\{i|\,1\leq i\leq q,i\neq k,h\left(\alpha^{i(q-1)}\right)=0\,\right\}.
\]
and
\[
h(x)=(d+bc)^q x^2+
\bigl(\operatorname{Tr}(d)+b^{q+1}+c^{q+1}\bigr)x+d+bc.
\]
In the cases where $b^{q+1}=c^{q+1}, b^q\neq c$, $k$ denotes an integer
with $1\leq k\leq q$ satisfying
\[
b^q\alpha^{kq}+c\alpha^k=0.
\]
\end{lemma}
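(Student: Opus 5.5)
The plan is to repeat the character-sum computation of Lemmas~\ref{l5} and \ref{l3.3}, but to evaluate the inner sum with Lemma~\ref{l2} in place of Lemma~\ref{l2.3}. Using $\chi(ubw^q)=\chi((ub)^q w)$, we write
\[
Mq^2=q^2+\sum_{u\in\mathbb{F}_{q^2}^*}\chi(du)\sum_{w\in\mathbb{F}_{q^2}}\chi\bigl(uw^{q+1}+B_u w\bigr),\qquad B_u=b^qu^q+cu .
\]
Every $u\neq 0$ can be written uniquely as $u=\alpha^i v$ with $0\le i\le q$ and $v\in\mathbb{F}_q^*$. Here $i=0$ corresponds exactly to $u\in\mathbb{F}_q^*$, and $1\le i\le q$ to $u\notin\mathbb{F}_q$. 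We treat these two ranges separately.

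For $u=v\in\mathbb{F}_q^*$ we have $B_u=v(b^q+c)$. If $b^q\ne c$, Lemma~\ref{l2} gives $0$ for the inner sum. If $b^q=c$, it gives $q^2$, and the total contribution is $q^2\sum_{v\in\mathbb{F}_q^*}\psi(v\operatorname{Tr}(d))$. In characteristic $2$ we have $\operatorname{Tr}(d)=0$ iff $d\in\mathbb{F}_q$, so this contribution equals $q^2(q-1)$ or $-q^2$ accordingly.

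For $u\notin\mathbb{F}_q$, Lemma~\ref{l2} gives either $-q$ (when $B_u=0$) or $-q\,\chi\bigl(uB_u^{q+1}/\operatorname{Tr}(u)^2\bigr)$. With $u=\alpha^i v$, both $B_u^{q+1}$ and $\operatorname{Tr}(u)^2$ scale by $v^2$. Hence the whole phase, including $\chi(du)$, becomes $\psi\bigl(v\,T_i\bigr)$ with
\[
T_i=\operatorname{Tr}\Bigl(d\alpha^i+\frac{\alpha^iB_{\alpha^i}^{q+1}}{\operatorname{Tr}(\alpha^i)^2}\Bigr),
\]
and summing over $v$ gives $q-1$ or $-1$ according as $T_i=0$ or not. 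Next we clear the denominator $\operatorname{Tr}(\alpha^i)^2\in\mathbb{F}_q$, expand $B^{q+1}=(b^q\alpha^{iq}+c\alpha^i)(b\alpha^i+c^q\alpha^{iq})$, divide by $\alpha^{i(q+1)}$, and set $x=\alpha^{i(q-1)}$. The condition $T_i=0$ should then reduce to a quadratic in $x$, after multiplying through by $x$, namely $h(x)=0$. Checking that the coefficients come out exactly as $(d+bc)^q$, $\operatorname{Tr}(d)+b^{q+1}+c^{q+1}$ and $d+bc$ is the computational heart of the argument and the step I expect to be the main obstacle; cross terms like $bc\,x$ and $b^qc^q x^{-1}$ must combine with $d$ and $d^q$ correctly.

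It remains to handle indices with $B_u=0$ for $u\notin\mathbb{F}_q$. The equation $b^qu^q+cu=0$ with $u\notin\mathbb{F}_q$, $u\ne0$ forces $b^{q+1}=c^{q+1}$ and $b^q\ne c$, since $b^q=c\ne0$ gives $u^q=u$. It then holds for exactly one coset, the index $k$. In that case the $k$-th term is $-q\sum_v\chi(d\alpha^kv)$, which is $-q(q-1)$ or $+q$ according as $\operatorname{Tr}(d\alpha^k)=0$. Using $b^q\alpha^{kq}=c\alpha^k$, this condition translates into $b^qd+cd^q=0$.

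Assembling, a generic index contributes $-q(q-1)$ if $i\in N$ and $+q$ otherwise, so $q$ indices give $q^2-q^2|N|$. This yields $M=2-|N|$ when $b^{q+1}\ne c^{q+1}$, and $1-|N'|$ or $2-|N'|$ in the degenerate case after isolating $k$. When $b^q=c$ (including $b=c=0$), $B_u$ is a multiple of $u^q+u$. In the subcase $b=c=0$, all $u\notin\mathbb{F}_q$ give $-q\chi(du)$, so the sum over cosets is direct and yields $1$, $q+1$, or $0$ according as $d=0$, $d\in\mathbb{F}_q^*$, or $d\notin\mathbb{F}_q$. For $b^q=c\ne0$, $h$ simplifies because $bc=c^{q+1}$. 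The $\mathbb{F}_q$-part computed above must be combined with a direct count of $i$ for which $T_i=0$, distinguishing $d=c^{q+1}$. Finally I would sanity-check each case against a direct factorisation, e.g. $x^{q+1}+bx^q+cx+d=(x+b)(x^q+c)+d+bc$ when $b^q=c$.
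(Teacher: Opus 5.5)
Your proposal is correct and follows the paper's proof. Both use the same expansion with $B_u=b^qu^q+cu$, evaluate the inner sum by Lemma~\ref{l2}, decompose cosets as $u=\alpha^i v$, reduce $T_i=0$ to $h(\alpha^{i(q-1)})=0$, and isolate the index $k$, where $\operatorname{Tr}(d\alpha^k)=0\iff b^qd+cd^q=0$. The coefficient check you flag goes through, because $\operatorname{Tr}(\alpha^i)\operatorname{Tr}(d\alpha^i)+B_{\alpha^i}^{q+1}=\alpha^{2i}\bigl[(d+bc)+(\operatorname{Tr}(d)+b^{q+1}+c^{q+1})x+(d+bc)^qx^2\bigr]$ with $x=\alpha^{i(q-1)}$. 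Your explicit handling of the $b^q=c$ cases also fills in what the paper only dismisses as ``similar.''
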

\begin{proof}
Proceeding as in Lemma~\ref{l6}, we obtain
\begin{align}
Mq^2
&= q^2+\sum_{u\in\mathbb{F}_q^{*}}\chi(du)
\sum_{w\in\mathbb{F}_{q^2}}
\chi\big(uw^{q+1}+(b^q+c)uw\big) \notag\\
&\quad+\sum_{u\in\mathbb{F}_{q^2}\setminus\mathbb{F}_q}
\chi(du)\sum_{w\in\mathbb{F}_{q^2}}
\chi\big(uw^{q+1}+(b^qu^q+cu)w\big).
\label{eq:l7-main}
\end{align}

First suppose that
$b^q\neq c$, $b^{q+1}=c^{q+1}$.
By Lemma~\ref{l2},
\[
\sum_{u\in\mathbb{F}_q^{*}}\chi(du)
\sum_{w\in\mathbb{F}_{q^2}}
\chi\big(uw^{q+1}+(b^q+c)uw\big)=0.
\]
Hence, it remains to evaluate the last sum in
\eqref{eq:l7-main}.

Let
$H=\left\{
u\in\mathbb{F}_{q^2}\setminus\mathbb{F}_q:
b^qu^q+cu=0
\right\}.$
Since every element of $\mathbb{F}_{q^2}\setminus\mathbb{F}_q$ can be uniquely written in the form $\alpha^i u$, where $1\leq i\leq q$ and $u\in\mathbb{F}_q^*$, there is a unique $k$, with $1\leq k\leq q$, such that $
b^q\alpha^{kq}+c\alpha^k=0.
$ Consequently, $H=\left\{\alpha^k u:u\in\mathbb{F}_q^*\right\}.
$

By Lemma~\ref{l2}, we have
\begin{align}
&\sum_{u\in\mathbb{F}_{q^2}\setminus\mathbb{F}_q}
\chi(du)\sum_{w\in\mathbb{F}_{q^2}}
\chi\big(uw^{q+1}+(b^qu^q+cu)w\big) \notag\\
&\qquad =
-q\sum_{u\in H}\chi(du)
-q\sum_{\substack{
u\in\mathbb{F}_{q^2}\setminus\mathbb{F}_q\\
u\notin H}}
\chi\left(
\frac{
\operatorname{Tr}(u)^2du+
u(b^qu^q+cu)^{q+1}
}{
\operatorname{Tr}(u)^2
}
\right)\notag\\
&\qquad=-q\sum_{u\in\mathbb{F}_q^*}
\psi\big(\operatorname{Tr}(d\alpha^k)u\big) \notag
-q\sum_{\substack{1\leq i\leq q\\i\neq k}}
\sum_{u\in\mathbb{F}_q^*}
\psi\big(u\operatorname{Tr}(f(\alpha^i))\big),
\end{align}
where $f(\alpha^i)= \dfrac{d\alpha^i\operatorname{Tr}(\alpha^i)^2+\alpha^i(b^q\alpha^{iq}+c\alpha^i)^{q+1}}{\operatorname{Tr}(\alpha^i)^2}.$\\

Let $N' \subseteq \{1,\dots,q\}\setminus\{k\}$ be such that $ \operatorname{Tr}(f(\alpha^i))=0 \text{ if and only if } i \in N'. $ Therefore, $$ \sum_{u \in \mathbb{F}_q^{*}} \psi\big(u\,\operatorname{Tr}(f(\alpha^i))\big) = \begin{cases} q-1, & \text{if } i \in N',\\[1mm] -1, & \text{if } i \notin N'. \end{cases} $$  
Thus,

$\begin{aligned}
&\sum_{u\in\mathbb{F}_{q^2}\setminus\mathbb{F}_q}
\chi(du)
\sum_{w\in\mathbb{F}_{q^2}}
\chi\bigl(uw^{q+1}+(b^qu^q+cu)w\bigr)
=
\begin{cases}
-q^2|N'|, & b^qd+cd^q=0,\\[2mm]
q^2(1-|N'|), & b^qd+cd^q\neq0.
\end{cases}
\end{aligned}$

Furthermore, $ \operatorname{Tr}(f(\alpha^i))=0 $ if and only if $ (d^q+b^qc^q)x^2+(d+d^q+b^{q+1}+c^{q+1})x+(d+bc)=0, $ where $ x=\alpha^{i(q-1)}$. Thus, the result follow.

The remaining cases can be dealt with in a similar manner.

\end{proof}
In the following lemma, we determine the number of roots of the same polynomial considered in Lemma~\ref{l7} over a field of odd characteristic. We obtain a result that is somewhat similar to that of Lemma~\ref{l7}.

\begin{lemma}\label{l333}
Let $q$ be odd and $\alpha$ be a primitive element of
$\mathbb{F}_{q^2}$. Then $M$, the number of zeroes of $f(x)=x^{q+1}+bx^q+cx+d\in\mathbb{F}_{q^2}[x]$ in $\mathbb{F}_{q^2}$ is given by
\[
M=
\begin{cases}
1, & b=c=d=0,\\[2mm]
q+1, & b=c=0,\ d\in\mathbb{F}_q^*,\\[2mm]
0, & b=c=0,\ d\notin\mathbb{F}_q,\\[2mm]
1, & b^q=c\neq0,\ d\in\mathbb{F}_q,\ d=b^{q+1},\\[2mm]
q+1, & b^q=c\neq0,\ d\in\mathbb{F}_q,\ d\neq b^{q+1},\\[2mm]
0, & b^q=c\neq0,\ d\notin\mathbb{F}_q,\\[2mm]
1-|N'|, &
b^q\neq c,\quad
b^{q+1}=c^{q+1},\quad
b^q d=cd^q,\\[2mm]
2-|N'|, &
b^q\neq c,\quad
b^{q+1}\neq c^{q+1},\quad
b^q d\neq cd^q,\\[2mm]
2-|N|, &
b^{q+1}=c^{q+1}.
\end{cases}
\]
Here
\[
h(x)
=(d-bc)^q x^2
+\bigl(\operatorname{Tr}(d)-b^{q+1}-c^{q+1}\bigr)x
+d-bc,
\]
$k$ and $k'$ are fixed integers satisfying $0<k\leq q, 0\leq k'\leq q,k\neq k',
$
satisfying
\[\operatorname{Tr}(\alpha^k)=0
\text{ and }
b^q\alpha^{k'q}+c\alpha^{k'}=0,
\]
\[
N'
=
\left\{i|0\leq i\leq q\setminus\{k,k'\}:
h\left(\alpha^{i(q-1)}\right)=0
\right\},
\]
and
\[
N
=
\left\{i|0\leq i \leq q\setminus\{k\}:
h\left(\alpha^{i(q-1)}\right)=0
\right\}.
\]
\end{lemma}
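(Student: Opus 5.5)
We follow the template of Lemma~\ref{l7}, now using the odd-characteristic evaluation in Lemma~\ref{l3}. First we write $Mq^2=\sum_{u}\sum_{w}\chi(uf(w))$. Using $\chi(ubw^q)=\chi(u^qb^qw)$, this becomes
\[
Mq^2=q^2+\sum_{u\in\mathbb{F}_{q^2}^*}\chi(du)\sum_{w\in\mathbb{F}_{q^2}}\chi\bigl(uw^{q+1}+(b^qu^q+cu)w\bigr).
\]
For odd $q$ the relevant dichotomy in Lemma~\ref{l3} is $\operatorname{Tr}(u)=0$ versus $\operatorname{Tr}(u)\neq 0$, rather than $u\in\mathbb{F}_q$. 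Writing nonzero elements as $\alpha^iu$ with $0\le i\le q$ and $u\in\mathbb{F}_q^*$, there is a unique $k$ with $\operatorname{Tr}(\alpha^k)=0$. Since $\operatorname{Tr}(1)=2\neq0$, we have $k\neq 0$, so $0<k\le q$. The line $\{\alpha^ku\}$ contributes $q^2$ exactly when $b^q\alpha^{kq}+c\alpha^k=0$, and $0$ otherwise.

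For the remaining $u$, those with $\operatorname{Tr}(u)\neq0$, Lemma~\ref{l3} gives $-q$ when $B:=b^qu^q+cu=0$. When $B\neq0$ it gives
\[
-q\,\chi\!\left(-uB^{q+1}/\operatorname{Tr}(u)^2\right).
\]
The zero set of $B$ as a function of $u$ is handled as in Lemma~\ref{l5}. If $b^{q+1}\neq c^{q+1}$ it is $\{0\}$. If $b^{q+1}=c^{q+1}$ (with $(b,c)\neq(0,0)$) it is a single $\mathbb{F}_q$-line $\{\alpha^{k'}u\}$. That line coincides with $\{\alpha^kw\}$ exactly when $b^q=c$, because $\alpha^{kq}=-\alpha^k$.

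This splits the cases. If $b=c=0$, only the $k$-line survives: it contributes $q^2\sum_{u\in\mathbb{F}_q^*}\psi(u\operatorname{Tr}(d\alpha^k))$, and every other line contributes $-q\sum_{u}\psi(u\operatorname{Tr}(d\alpha^i))$. Summing over all lines, the result is governed by whether $d\in\mathbb{F}_q$, giving $1,q+1,0$. One can also check this directly, since $x^{q+1}=-d$ has $q+1$, $1$ or $0$ roots. The case $b^q=c\neq0$ is similar. Here we expect a shortcut: when $b^q=c$ we have $f(x)=(x+b)^{q+1}+d-b^{q+1}$, and $(x+b)^{q+1}$ takes values in $\mathbb{F}_q$. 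This immediately gives $1$, $q+1$ or $0$ according as $d=b^{q+1}$, $d\in\mathbb{F}_q\setminus\{b^{q+1}\}$, or $d\notin\mathbb{F}_q$. We would present it either way, as a consistency check.

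In the generic cases, each line $i$ outside $\{k,k'\}$ contributes
\[
-q\sum_{u\in\mathbb{F}_q^*}\psi\bigl(u\operatorname{Tr}(g(\alpha^i))\bigr),\qquad g(v)=dv-\frac{v(b^qv^q+cv)^{q+1}}{\operatorname{Tr}(v)^2}.
\]
Here we use that $u\in\mathbb{F}_q$ scales $uB^{q+1}/\operatorname{Tr}(u)^2$ linearly. Each such sum equals $q-1$ or $-1$. Exactly as in Lemma~\ref{l6}, we use $\sum_{i}(-1)\cdot(-q)$ over all lines together with the correction at $k$ and $k'$. The constant pieces should cancel against $q^2$ up to the stated $1$ or $2$. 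The $k'$ line contributes $-q\sum_u\psi(u\operatorname{Tr}(d\alpha^{k'}))$, which is $-q(q-1)$ or $q$ according as $b^qd=cd^q$ or not. This is where the case distinction $b^qd=cd^q$ arises, mirroring Lemma~\ref{l7}.

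The main computational step, and the expected obstacle, is converting $\operatorname{Tr}(g(v))=0$ into $h(v^{q-1})=0$. We multiply by $\operatorname{Tr}(v)^2$, which lies in $\mathbb{F}_q$, and expand $(b^qv^q+cv)^{q+1}=b^{q+1}v^{2q}+b^qc^qv^{q+1}\cdot(\dots)$. Dividing by $v^{q+1}$ and setting $x=v^{q-1}$, one should obtain
\[
(d-bc)^qx^2+\bigl(\operatorname{Tr}(d)-b^{q+1}-c^{q+1}\bigr)x+(d-bc)=0
\]
after collecting terms. The sign changes relative to Lemma~\ref{l7} come from the $-$ in Lemma~\ref{l3}. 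Care is needed to track which of $i=k,k'$ must be excluded from $N$ and $N'$; the exclusion of $i=0$ is not needed, since $\operatorname{Tr}(1)\ne0$. Finally, we note that the case labels in the statement appear to have the conditions $b^{q+1}=c^{q+1}$ and $\neq$ permuted between the last three lines. The proof will naturally produce $2-|N|$ when $b^{q+1}\neq c^{q+1}$ (only $k$ excluded), and $1-|N'|$ or $2-|N'|$ when $b^{q+1}=c^{q+1}$, $b^q\ne c$, according to $b^qd=cd^q$.
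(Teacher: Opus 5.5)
Your proposal is correct and takes the route the paper indicates: the character-sum template of Lemma~\ref{l7}, with Lemma~\ref{l3} supplying the odd-characteristic evaluations on the lines $\alpha^i\mathbb{F}_q^*$ and on the special lines indexed by $k$ and $k'$. You are also right that the conditions $b^{q+1}=c^{q+1}$ and $b^{q+1}\neq c^{q+1}$ in the last two cases of the statement are swapped. One slip to fix in the key computation is the expansion $(b^qv^q+cv)^{q+1}=(b^qv^q+cv)(bv+c^qv^q)=(b^{q+1}+c^{q+1})v^{q+1}+(bc)^qv^{2q}+bc\,v^2$, which is not what you wrote, although the polynomial $h$ you arrive at is the correct one.
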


\begin{proof}
The proof is routine and follows similarly to the proof of Theorem~\ref{l7}. Applying Lemma~\ref{l3} then yields the stated conclusion
\end{proof}
With all the results concerning the number of zeros in hand, we now proceed to characterize the polynomials as permutation or non-permutation polynomials.

\begin{theorem}\label{t1}
Let $q$ be odd. Then the polynomial
$f(x)=x^q+bx^2+cx+d\in\mathbb{F}_{q^2}[x]$ is not a permutation polynomial of $\mathbb{F}_{q^2}$ if either $b=0$ and $c\in\mu_{q+1}$ or, $b\neq0$.
\end{theorem}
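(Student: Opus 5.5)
We split according to the two hypotheses of the theorem. In each case the aim is to exhibit a value $t\in\mathbb{F}_{q^2}$ for which $f(x)=t$ has either no solution or at least two solutions.

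\emph{Case $b=0$ and $c\in\mu_{q+1}$.} Here $f(x)-t=x^q+cx+(d-t)$ is an affine $q$-polynomial. We apply Lemma~\ref{l5} with $d$ replaced by $d-t$. Since $c\in\mu_{q+1}$, that lemma says the number of zeros of $f(x)-t$ is either $q$ or $0$; it is never $1$. Taking $t=d$, the condition $0=c\cdot 0^q$ holds, so $f(x)=d$ has exactly $q\ge 3$ solutions. Hence $f$ is not a permutation of $\mathbb{F}_{q^2}$. The same conclusion follows directly from the fact that the $\mathbb{F}_q$-linear map $x\mapsto x^q+cx$ has a nontrivial kernel.

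\emph{Case $b\neq0$.} Lemma~\ref{l6} only covers $c=0$, and its zero count involves the set $K$, which is awkward to control. So instead of counting zeros with Weil sums, we produce an explicit collision $f(x)=f(y)$ with $x\neq y$. Put $z=x-y$ and $s=x+y$. Using $(x-y)^q=x^q-y^q$ and $x^2-y^2=(x-y)(x+y)$, we get
$$
f(x)-f(y)=z^q+bzs+cz=z\bigl(z^{q-1}+bs+c\bigr).
$$
Fix any $z\in\mathbb{F}_{q^2}^*$, for instance $z=1$, and set $s=-(z^{q-1}+c)/b$; this is allowed since $b\neq0$. Because $q$ is odd, $2$ is invertible, so we may define $x=(s+z)/2$ and $y=(s-z)/2$. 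These are distinct elements of $\mathbb{F}_{q^2}$ with $x-y=z$ and $x+y=s$, and they satisfy $f(x)=f(y)$. Hence $f$ is not injective, and therefore not a permutation.

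The only step needing care is recovering $x,y$ from $z,s$, which is exactly where odd characteristic is used. In even characteristic $x^2-y^2=(x-y)^2$, so the factorization above collapses; this is why Lemma~\ref{l3.3} needs a separate analysis. I do not expect a real obstacle in the odd case. If one prefers to stay within the character-sum framework, one could instead apply Lemma~\ref{l6} after a translation removing $cx$, but the direct collision argument is cleaner and handles all $c,d$ at once.
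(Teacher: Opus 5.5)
Your proof is correct. It follows the paper's route for $b=0$, $c\in\mu_{q+1}$ (an appeal to Lemma~\ref{l5}), but for $b\neq0$ it is genuinely different.

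\textbf{The paper's route for $b\neq0$.} The paper first translates by $x\mapsto x+\frac{c}{2b}$ to reduce to $g(x)=x^q+bx^2+D$. It then uses the Weil-sum count of Lemma~\ref{l6}, $M=1-\eta(-b^{q+1})\sum_{i\in K_t}(-1)^i$. It splits into three subcases: $b\in\mathbb{F}_q^*$; $b\notin\mathbb{F}_q$ with $\operatorname{Tr}(b)=0$; and $\operatorname{Tr}(b)\neq0$. In each it picks a specific $t$ for which the cubic $h_t$ has a controlled root set in $\mu_{q+1}$: a triple root at $-1$, a triple root at $1$, or roots $1,-1,b^{1-q}$. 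This forces $\sum_{i\in K_t}(-1)^i\neq0$, hence $M\neq1$.

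\textbf{Your route.} You replace all of this by the identity $f(x)-f(y)=z\bigl(z^{q-1}+bs+c\bigr)$. It uses only two facts: $x\mapsto x^q$ is additive, and $x\mapsto (x+z)^2-x^2=2zx+z^2$ is a bijection for $z\neq0$ when $q$ is odd. Consequently you need no translation, no case split on $b$, and no appeal to Lemma~\ref{l1} or Lemma~\ref{l6}. You also get an explicit colliding pair $x=\tfrac{s+1}{2}$, $y=\tfrac{s-1}{2}$ with $s=-\tfrac{1+c}{b}$. The argument generalizes at once: $L(x)+bx^2+d$, with $L$ any additive polynomial and $b\neq0$, is never a permutation in odd characteristic.

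\textbf{What the paper's method gains.} It yields quantitative information: the exact number of preimages of each $t$, expressed through the roots of $h_t$ on $\mu_{q+1}$. This fits the paper's root-counting programme but is more than the non-permutation statement needs.
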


\begin{proof}
First consider the case when $b=0$. Then, by Lemma~\ref{l5}, the result follows directly.

Next assume that $b\neq 0$. Observe that
$
x^q+bx^2+cx+d = g\!\left(x+\dfrac{c}{2b}\right),
$
where
$
g(x)=x^q+bx^2+D,
$
with
$
D=d-\dfrac{c^2}{4b}-\dfrac{c^q}{2b^q}.
$

Since translation preserves the permutation property (and hence the non-permutation property), it suffices to restrict our attention to polynomials of the form $g(x)=x^q+bx^2+D$.
Thus, the equation
$
g(x)+t=0
$
has more than one solution in $\mathbb{F}_{q^2}$ for some $t\in \mathbb{F}_{q^2}$ if and only if
$$
\sum_{i\in K_t} (-1)^i \neq 0,
$$
 where
$
K_t=\left\{\, i \mid 0\leq i\leq q,\; h_t\!\left(\alpha^{\,i(q-1)}\right)=0 \right\},
$
and
$
h_t(x)=b^q x^3-4b^{q+1}(D+t)^q x^2-4b^{q+1}(D+t)x+b.
$
Here, $\alpha$ denotes a primitive element of $\mathbb{F}_{q^2}$. This follows directly from Lemma~\ref{l6}.\\
Notice first that
$
J=\left\{\alpha^{i(q-1)} \mid 0\leq i\leq q \right\}
$
is a subgroup of $\mathbb{F}_{q^2}^{*}$ of order $q+1$, generated by $\alpha^{q-1}$.

We first consider the case $b\in \mathbb{F}_q^{*}$. For
$
t=-D-\dfrac{3}{4b},
$
we have
$
h_t(-1)=0
\quad\text{and}\quad
h_t'(-1)=0.
$
Hence, $-1$ is a root of $h_t(x)=0$ of multiplicity at least $2$. Since $-1\in \mu_{q+1}$ and the product of all roots of $h_t(x)$ is
$
-\dfrac{b}{b^q}=-1,
$
the third root must also be $-1$. Therefore, $-1$ is in fact a root of multiplicity $3$.

Now
$
-1=(\alpha^{q-1})^{\dfrac{q+1}{2}},
$
and therefore
$
K_t=\left\{\dfrac{q+1}{2}\right\}.
$
If $q\equiv 1 \pmod 4$, then $\dfrac{q+1}{2}$ is odd, whereas if $q\equiv 3 \pmod 4$, then $\dfrac{q+1}{2}$ is even. In either case,
$$
\sum_{i\in K_t} (-1)^i \neq 0.
$$
Similarly, assume that $b\in \mathbb{F}_{q^2}\setminus \mathbb{F}_q$ with $\operatorname{Tr}(b)=0$. Take
$
t=\dfrac{2b^{1-q}-1}{4b}-D.
$
Then one readily verifies that $1$ is a root of $h_t(x)=0$ of multiplicity $3$. Consequently,
$
K_t=\{0\}.
$
Therefore,
$$
\sum_{i\in K_t} (-1)^i \neq 0.
$$
Finally, assume that $b\in \mathbb{F}_{q^2}\setminus \mathbb{F}_q$ with $\operatorname{Tr}(b)\neq 0$. For
$
t=\dfrac{1}{4b}-D,
$
the roots of $h_t(x)=0$ are $1$, $-1$, and $\gamma$, where $\gamma\neq ,1-1$. Hence,
$
K_t=\{r,j,k\}.
$

For all possible parity distributions of $r$, $j$ and $k$, we have
$$
\sum_{i\in K_t} (-1)^i \neq 0.
$$
Thus the result follows.

\end{proof}

\begin{remark}\label{r1}
It follows from Theorem~\ref{t1} and Lemma~\ref{l5} that, for odd $q$, a polynomial
$g(x)=x^q+bx^2+cx+d\in\mathbb{F}_{q^2}[x]$ permutes $\mathbb{F}_{q^2}$ if and only if $b=0$ and $c\notin\mu_{q+1}$. In this case, the resulting permutation polynomial is not new, it is of Wan-Lidl type and has been studied extensively in the literature.
\end{remark}
\begin{theorem}\label{tn2}
Let $q=2^k$ and $b,c \in \mathbb{F}_q$.
Then $x^q+bx^2+cx+d\in\mathbb{F}_{q^2}[x]$ is not a permutation polynomial of $\mathbb{F}_{q^2}$ if either $b=0$ and $c=1$, or $
b\neq0$ and $c\neq1$.
\end{theorem}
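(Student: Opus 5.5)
The plan is to treat the two cases of the statement separately. In each case I will show that $f(x)+t$ has at least two zeros, or no zero, in $\mathbb{F}_{q^2}$ for a suitably chosen $t\in\mathbb{F}_{q^2}$. That contradicts bijectivity, by the principle stated in the introduction.

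\emph{Case $b=0$, $c=1$.} Here $f(x)=x^q+x+d=\operatorname{Tr}(x)+d$, so $f$ maps $\mathbb{F}_{q^2}$ into the coset $d+\mathbb{F}_q$ and cannot be surjective. Equivalently, Lemma~\ref{l5} applies with $c=1\in\mu_{q+1}$. Replacing $d$ by $d+t$ with $t$ chosen so that $d+t\in\mathbb{F}_q$, the condition $d+t=(d+t)^q$ holds. The equation $f(x)+t=0$ then has $q\ge 2$ solutions, so $f$ is not a permutation.

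\emph{Case $b\neq 0$, $c\neq 1$.} I will apply Lemma~\ref{l3.3} to $f(x)+t$, i.e.\ with $d$ replaced by $d+t$. The key observation is in the proof of that lemma: the number of zeros of $x^q+bx^2+cx+d'$ is
$$M=1+\sum_{u\in H}\chi(d'u),\qquad H=\{u\in\mathbb{F}_{q^2}^*: u^{2q}+c^2u^2+bu=0\},$$
and $H$ does not depend on $d'$. Choosing $t=d$ (so $d'=d+t=0$ in characteristic $2$) makes every character value equal to $1$, hence $M=1+|H|$. It then suffices to check that $H\neq\emptyset$ in each subcase allowed by $c\neq1$:
\begin{itemize}
\item If $c=0$, the proof of Lemma~\ref{l3.3} shows that $u^{2q-1}=b$ has one solution ($k$ even) or three solutions ($k$ odd, using $b\in\mathbb{F}_q^*$). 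So $|H|\ge1$.
\item If $c\in\mathbb{F}_q^*\setminus\{1\}$, the root $r=1$ of $(r+1)(r^2+(c+1)^2r+1)$ always gives the element $u=b/(c+1)^2\in H$. This is consistent with parts (4) and (5) of Lemma~\ref{l3.3}, where the term $\chi\!\left(db/(c+1)^2\right)$ always appears.
\end{itemize}
Thus $f(x)+d=x^q+bx^2+cx$ has at least two zeros: $x=0$ and one nonzero root. Therefore $f(x)=f(0)$ has a nonzero solution, and $f$ is not a permutation.

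The only delicate point is confirming that $H$ is genuinely nonempty, particularly for $c=0$ with $k$ odd. There one needs $b$ to lie in the image of $u\mapsto u^{2q-1}$; this follows because $b\in\mathbb{F}_q^*\subseteq\langle\alpha^3\rangle$, as noted in the proof of Lemma~\ref{l3.3}. As a sanity check, one can also verify the root $u=b/(c+1)^2$ directly: it lies in $\mathbb{F}_q$, so $u^{2q}=u^2$ and
$$u^{2q}+c^2u^2+bu=(1+c)^2u^2+bu=\frac{b^2}{(c+1)^2}+\frac{b^2}{(c+1)^2}=0,$$
confirming the claim without relying on the trace condition.
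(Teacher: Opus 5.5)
Your proof is correct and follows the paper's route. You take $t=d$ so that the constant term vanishes, then read off $M=q$ from Lemma~\ref{l5} when $b=0,c=1$, and $M=1+|H|\ge 2$ from Lemma~\ref{l3.3} when $b\neq0,c\neq1$; your explicit check that $u=b/(c+1)^2\in H$, which also covers $c=0$, makes transparent the nonemptiness of $H$ that the paper leaves implicit in the case formulas of Lemma~\ref{l3.3}.
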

\begin{proof}
By Lemmas~\ref{l5} and Lemma~\ref{l3.3}, if $b=0$ and $c=1$, or if $b\neq 0$ and $c\neq 1$, then, on taking $t=d$, the equation
$x^q+bx^2+cx+d+t=0$
has more than one solution in $\mathbb{F}_{q^2}$, thus failing to display permutation behavior.
\end{proof}
\begin{remark}\label{r11}
A clear consequence of the above discussion and Lemma~\ref{l5} and Lemma~\ref{l3.3} is that, for even $q$ and
$b,c\in\mathbb{F}_q$, the polynomial
$x^q+bx^2+cx+d\in\mathbb{F}_{q^2}[x]$
permutes $\mathbb{F}_{q^2}$ if and only if either
$b=0$ and $c\neq 1$,
or
$b\neq0$ and $c=1$.
\end{remark}

We note that in Theorem~14 \cite{pp2}, it is shown that when $\operatorname{Tr}(A)=1$, the polynomial $\operatorname{Tr}(A x^{q+1}) + L(x)$ does not permute $\mathbb{F}_{q^2}$. This implies that our polynomial $x^{q+1} + b x^q + c x + d$ never permute $\mathbb{F}_{q^2}$ when $q$ is even. For completeness and to keep the paper self-contained, we also provide an independent proof of this fact i.e., Theorem,~\ref{p3.8}. However, our approach is entirely different from that in \cite{pp2}, thereby preserving the novelty of our method.

\begin{theorem}\label{p3.8}
Let $q$ be even. Then the polynomial
$
x^{q+1}+bx^q+cx+d \in \mathbb{F}_{q^2}[x]
$
is never a permutation polynomial of $\mathbb{F}_{q^2}$.
\end{theorem}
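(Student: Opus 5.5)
Rather than evaluating the counts $1-|N'|$, $2-|N'|$, $2-|N|$ of Lemma~\ref{l7} for a suitable shift $t$, I would use an elementary factorization. It exhibits two distinct points with the same image, which is exactly the non-permutation criterion from the introduction. The key identity is
$$
(x+b)(x^q+c)=x^{q+1}+bx^q+cx+bc ,
$$
so that
$$
f(x)=x^{q+1}+bx^q+cx+d=(x+b)(x^q+c)+(d-bc).
$$
Since the additive constant $d-bc$ does not affect bijectivity, it suffices to show that $F(x)=(x+b)(x^q+c)$ is not a permutation of $\mathbb{F}_{q^2}$.

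Next I would translate by $y=x+b$, which also preserves the permutation property. Using $x^q=y^q+b^q$, we get
$$
F=y\,(y^q+e),\qquad e:=b^q+c\in\mathbb{F}_{q^2}.
$$
So we are reduced to the family $G(y)=y^{q+1}+ey$.

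I would then split into two cases according to whether $e$ vanishes.
\begin{enumerate}
\item If $e=0$, then $G(y)=y^{q+1}=\operatorname{N}(y)$ takes values only in $\mathbb{F}_q$. A map from a set of size $q^2$ into a set of size $q$ cannot be a bijection of $\mathbb{F}_{q^2}$. More concretely, $G(1)=G(\zeta)=1$ for any $\zeta\in\mu_{q+1}\setminus\{1\}$, and such a $\zeta$ exists since $q+1\ge 3$. This matches the cases $b=c=0$ and $b^q=c$ of Lemma~\ref{l7}, where $M\in\{0,q+1\}$ for suitable $d$.
\item If $e\neq 0$, then $G(0)=0$. Also $y_0=e^q\neq 0$ satisfies $y_0^q=e^{q^2}=e$, so $y_0^q+e=2e$. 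Here I must check the characteristic: for $q$ even, $2e=0$ and hence $G(y_0)=0$. Thus $G$ has the two distinct zeros $0$ and $e^q$ and is not injective.
\end{enumerate}

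Undoing the translation, $f(-b)=f(e^q-b)$ with $-b\neq e^q-b$, so $f$ is never a permutation polynomial of $\mathbb{F}_{q^2}$ when $q$ is even. The only point needing care is the sign in the second case: for odd $q$ one would instead need $y^q=-e$, which has the solution $y=-e^q$. So the argument essentially also works for odd $q$, and only the bookkeeping of signs changes.

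The main "obstacle" is therefore just spotting the factorization $(x+b)(x^q+c)$. Once it is found, no character sums are needed, though the result is consistent with Lemma~\ref{l7}.
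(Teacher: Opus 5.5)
Your proof is correct, and it takes a genuinely different route from the paper's.

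The paper works through its Weil-sum zero count, Lemma~\ref{l7}. It splits into several configurations: $b=c=0$; $b^q=c$; $b^q\neq c$ with $b^{q+1}=c^{q+1}$, divided by whether $bc\in\mathbb{F}_q$; and $b^{q+1}\neq c^{q+1}$, divided into $b=0$, $c=0$, $bc\neq 0$, the last using the self-conjugate reciprocal criterion of Lemma~\ref{l4}. In each it picks a shift $t$ and checks that $f+t$ does not have exactly one zero.

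Your factorization $f=(x+b)(x^q+c)+(d-bc)$ reduces all of this to the dichotomy $c=b^q$ versus $c\neq b^q$, and gives an explicit failure of injectivity in each case:
\begin{itemize}
\item If $c\neq b^q$, then $f(b)=f(c^q)$; your point $b+e^q$ is exactly $c^q$.
\item If $c=b^q$, then $f=(x+b)^{q+1}+(d-bc)$, so the image of $f$ lies in $(d-bc)+\mathbb{F}_q$.
\end{itemize}

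The paper's own shift $t=bc+d$ in the subcase $bc\notin\mathbb{F}_q$ (and $t=bc-d$ in the odd-$q$ theorem that follows) turns $f+t$ into exactly your $F(x)=(x+b)(x^q+c)$. The paper then counts its zeros through Lemma~\ref{l7} instead of reading them off.

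Your argument is self-contained and needs no character sums. With the sign bookkeeping you describe ($e=c-b^q$, second zero $-e^q$), it also proves the odd-$q$ theorem, since Frobenius is additive in every characteristic.

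What the paper's route buys is quantitative information: the exact number of solutions of $f(x)=t$ for the chosen $t$, in line with its root-counting method. For the bare non-permutation statement, your factorization is simpler and more transparent.
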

\begin{proof}
For the cases $b=c=0$ and $b^q=c$, choose $t=d+1$ and $t=d$ respectively. By Lemma~\ref{l7}, in each case the resulting polynomial fails to be a permutation polynomial.
Before proceeding further, note that, for any $k$ with $1\leq k\leq q$,

$$
\left\{
\alpha^{i(q-1)}:1\leq i\leq q,\ i\neq k
\right\}
\subsetneq
\left\{
\alpha^{i(q-1)}:1\leq i\leq q
\right\}
=\mu_{q+1}\setminus\{1\}.
$$
\textbf{Case 1} $
b^q\neq c,b^{q+1}=c^{q+1}$.\\
First suppose that $bc\in\mathbb{F}_q$. Taking $t=d+1$, we obtain $h_t(x)=(1+bc)(x^2+1).
$ Thus, $1$ is its only root, and hence $|N'|=0$. By Lemma~\ref{l7}, we have $M=2$. 
Next consider $bc\notin\mathbb{F}_q$. Taking $t=bc+d$, we obtain $
h_t(x)=\operatorname{Tr}(bc)x.
$ Again, $|N'|=0$, and Lemma~\ref{l7} gives $
M=2.
$

Thus, in both sub cases, the polynomial fails to exhibit permutation behavior.\\
\textbf{Case 2:} $
b^{q+1}\neq c^{q+1}.
$\\
If $b=0$, then, taking $t=d$, we obtain $
h_t(x)=c^{q+1}x^2.
$ Thus, $N=0$, and by Lemma~\ref{l7}, $M=2.
$ A similar argument applies when $c=0$. 
Now suppose that $bc\neq0$. Taking $t=d$, we obtain $h_t(x)
=(bc)^qx^2+
\bigl(b^{q+1}+c^{q+1}\bigr)x+bc,
$ which is a self conjugate reciprocal (SCR) polynomial. Notice that $
h_t(1)\neq0.
$ By Lemma~\ref{l4}, the polynomial $h_t(x)$ either has two distinct roots
or has no roots in $\mu_{q+1}\setminus \{1\}$. Hence, $
|N|=0$ or $|N|=2.$ Therefore, $M=2\quad\text{or}\quad M=0,
$ respectively. Thus, in either case, the polynomial under consideration
fails to be a permutation polynomial.

This completes the proof.
\end{proof}
\begin{theorem}
Let $q$ be odd. Then
$
x^{q+1}+bx^q+cx+d \in \mathbb{F}_{q^2}[x]
$
is never a permutation polynomial of $\mathbb{F}_{q^2}$.
\end{theorem}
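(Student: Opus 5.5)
Rather than running through all the cases of Lemma~\ref{l333} (whose case distinctions would each need a separate choice of $t$, as in the proof of Theorem~\ref{p3.8}), the plan is to first simplify the polynomial by a translation and then exhibit two preimages of a single value directly. As in the proof of Theorem~\ref{t1}, translation preserves the (non-)permutation property, so we may replace $f(x)=x^{q+1}+bx^q+cx+d$ by $f(y-b)$. A short computation, using $(y-b)^q=y^q-b^q$, gives
\[
(y-b)^{q+1}+b(y-b)^q+c(y-b)+d = y^{q+1}+(c-b^q)\,y+(d-bc).
\]
Here the $b y^q$ and $b^{q+1}$ terms cancel. Hence, setting $e=c-b^q$ and $d'=d-bc$, it suffices to show that $g(y)=y^{q+1}+ey+d'$ is never a permutation of $\mathbb{F}_{q^2}$. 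Adding a constant does not affect this, so we study $y^{q+1}+ey$.

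\textbf{Case $e=0$} (that is, $b^q=c$). The map $y\mapsto y^{q+1}$ is the norm. It sends every element of $\mu_{q+1}$ to $1$, so $1$ has $q+1\ge 4$ preimages, and $g$ is not injective. This agrees with the cases $b=c=0$ and $b^q=c\neq 0$ of Lemma~\ref{l333}, where $M=q+1$ for a suitable $t$.

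\textbf{Case $e\neq 0$.} Write $y^{q+1}+ey=y\,(y^q+e)$. This vanishes at $y=0$. It also vanishes at the unique $y_0$ with $y_0^q=-e$, namely $y_0=-e^q$ (apply the $q$-th power, using $y^{q^2}=y$). Since $e\neq 0$, we have $y_0\neq 0$. Thus $g$ takes the value $d'$ at the two distinct points $0$ and $-e^q$, so $g$ is not a permutation. Translating back, $f$ takes the value $d$ at $x=-b$ and at $x=-e^q-b=-c^q+b-b$, i.e.\ at $x=-b$ and $x=-c^q$. One can double-check directly: $f(-c^q)=c^{q^2+q}\cdot(\text{sign})\ldots$ reduces to $d$.

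The only delicate point is the algebraic identity after translation, which must be checked carefully. The argument is characteristic-free, since it only uses $(y-b)^q=y^q-b^q$. It therefore also reproves Theorem~\ref{p3.8}, and makes the appeal to the character sums of Lemma~\ref{l3} via Lemma~\ref{l333} unnecessary. If one preferred to stay within the paper's framework, the alternative would be to mimic the proof of Theorem~\ref{p3.8} with Lemma~\ref{l333}: choose $t=bc-d$ so that $h_t$ collapses to a monomial or an SCR quadratic, and read off $M=2$. I expect the main obstacle in that route to be the bookkeeping of the exceptional indices $k,k'$, which is exactly what the translation trick avoids.
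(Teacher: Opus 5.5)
Your argument is correct, and it takes a genuinely different route from the paper.

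\textbf{The paper's route.} The paper stays inside its root-counting framework and applies Lemma~\ref{l333} with three choices of $t$:
\begin{itemize}
\item $t=1-d$ when $b=c=0$;
\item $t=-d$ when $b^q=c\neq 0$;
\item $t=bc-d$ in all remaining cases.
\end{itemize}
For the last choice the auxiliary quadratic collapses to $h_t(x)=-(b^q-c)^{q+1}x$. The index sets are then empty, and the Weil-sum count yields two zeros.

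\textbf{What your translation shows.} The substitution $x=y-b$ explains that choice of $t$: it gives the factorization $f(x)+bc-d=(x+b)(x^q+c)$. Its zeros $-b$ and $-c^q$ are distinct exactly when $b^q\neq c$. When $b^q=c$, the polynomial is a translate of the norm plus a constant, hence $(q+1)$-to-one on $\mathbb{F}_{q^2}^*$. So both proofs locate the same collision. The paper detects it through the character sums of Lemma~\ref{l3}, packaged in Lemma~\ref{l333}; you see it by factoring.

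\textbf{Comparison.} Your version is elementary and self-contained, and it does not depend on Lemma~\ref{l333}, whose proof the paper only sketches. It is also characteristic-free, so it reproves Theorem~\ref{p3.8} at the same time. The paper's approach buys uniformity with its general method of computing the exact number $M$ of zeros, but that is not needed for this statement.

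\textbf{One correction.} After translating back, the common value of $f$ at $x=-b$ and $x=-c^q$ is $d-bc$, not $d$. For example, $f(-b)=b^{q+1}-b^{q+1}-bc+d$. Your garbled ``double-check'' line should be replaced by this one-line computation, or simply by the factorization above. This does not affect the validity of the proof, since non-injectivity of $g$ already suffices.
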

\begin{proof}
For the cases $b=c=0$ and $b^q\neq0$, take $t=1-d$ and $t=-d$, respectively, and apply Lemma~\ref{l333}. For the remaining cases, choose $
t=bc-d.
$ Then $
h_t(x)=-(b^q-c)^{q+1}x,
$ whose only root is $x=0$. Applying Lemma~\ref{l333} once again gives the desired conclusion, thereby completing the proof.
\end{proof}
In the following proposition, we determine the compositional inverse of the permutation polynomial described in Remark~\ref{r11}. It suffices to consider the case $d=0$, since the general case follows by a translation.

\begin{proposition}
Let $q=2^k$. The compositional inverse of the permutation polynomial in Remark~\ref{r11} is given as follows:
\begin{enumerate}

\item If $b=0$, then
$
f^{-1}(x)=\dfrac{1}{c^2+1}\left(cx + x^q\right).
$
\item If $b\neq 0$, then
$
f^{-1}(x)=\phi^{-1}\!\left(x + \bar{f}^{-1}(\operatorname{Tr}(x))\right),
$
where
$$
\phi^{-1}(x) = \left(b^{-1}x\right)^{2^{2k-1}}
\quad \text{and} \quad
\bar{f}^{-1}(x) = \left(b^{-1}x\right)^{2^{k-1}}.
$$
\end{enumerate}
\end{proposition}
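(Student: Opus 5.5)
For part (1), with $b=0$ and $d=0$, the polynomial is $f(x)=x^q+cx$, where $c\in\mathbb{F}_q$ and $c\neq 1$. This is a $q$-linearized permutation of $\mathbb{F}_{q^2}$, so I will apply Lemma~\ref{l2.6} with $n=2$, $\alpha_0=c$ and $\alpha_1=1$. Since $c\in\mathbb{F}_q$, the Dickson matrix is
$D_L=\begin{pmatrix} c & 1\\ 1 & c\end{pmatrix}$. Its determinant is $c^2+1=(c+1)^2$, which is nonzero because $c\neq1$. The $(0,0)$-cofactor is $c$, and the $(1,0)$-cofactor is $-1=1$ in characteristic $2$. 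Lemma~\ref{l2.6} then gives $f^{-1}(x)=(c^2+1)^{-1}(cx+x^q)$. As a check, I will compose directly:
$(cx+x^q)^q+c(cx+x^q)=cx^q+x+c^2x+cx^q=(c^2+1)x$.

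For part (2), Remark~\ref{r11} forces $c=1$, so $f(x)=bx^2+x^q+x=bx^2+\operatorname{Tr}(x)$ with $b\in\mathbb{F}_q^*$. This has the shape in Lemma~\ref{l2.7} with the following choices:
\begin{itemize}
\item $h\equiv1$ and $\phi(x)=bx^2$;
\item $\psi=\overline{\psi}=\operatorname{Tr}$ and $g(x)=x$, so that $\bar f(x)=bx^2+\operatorname{Tr}(x)$.
\end{itemize}
The hypotheses are easy to verify. First, $\phi\circ\operatorname{Tr}=\operatorname{Tr}\circ\phi$ because $b\in\mathbb{F}_q$. Second, $\phi$ is a bijection of $\mathbb{F}_{q^2}$, and it maps $\operatorname{Tr}(\mathbb{F}_{q^2})=\mathbb{F}_q$ onto itself. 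Third, on $\mathbb{F}_q$ we have $\bar f(x)=bx^2$ because $\operatorname{Tr}(x)=2x=0$ there. Hence $\bar f^{-1}$ on $\mathbb{F}_q$ is the square root of $b^{-1}x$, namely $(b^{-1}x)^{2^{k-1}}$. Likewise $\phi^{-1}(x)=(b^{-1}x)^{2^{2k-1}}$ on $\mathbb{F}_{q^2}$. The second formula of Lemma~\ref{l2.7} then gives $f^{-1}(x)=\phi^{-1}\bigl(x-g(\bar f^{-1}(\operatorname{Tr}(x)))\bigr)=\phi^{-1}\bigl(x+\bar f^{-1}(\operatorname{Tr}(x))\bigr)$.

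Alternatively, and as a safeguard, I will give the direct argument. Put $y=f(x)$ and apply the trace. Since $\operatorname{Tr}(\operatorname{Tr}(x))=0$ and $b\in\mathbb{F}_q$, we get $\operatorname{Tr}(y)=b\operatorname{Tr}(x)^2$. Hence $\operatorname{Tr}(x)=(b^{-1}\operatorname{Tr}(y))^{2^{k-1}}$, using that squaring on $\mathbb{F}_q$ has inverse $z\mapsto z^{2^{k-1}}$. Substituting into $bx^2=y+\operatorname{Tr}(x)$ and taking the square root in $\mathbb{F}_{q^2}$, which is $z\mapsto z^{2^{2k-1}}$, recovers the stated formula.

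The main obstacle is bookkeeping rather than anything conceptual. I must check that the square-root exponents are correct in each field ($2^{k-1}$ on $\mathbb{F}_q$ and $2^{2k-1}$ on $\mathbb{F}_{q^2}$). I must also check that $\bar f^{-1}$ is only ever evaluated on $\operatorname{Tr}(x)\in\mathbb{F}_q$, where $\bar f$ really reduces to $bx^2$. Finally, I will handle general $d$ by translation: the inverse of $f+d$ is $f^{-1}(x+d)$, as noted before the proposition.
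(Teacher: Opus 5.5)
Your proposal is correct and takes the same route as the paper: part (1) via Lemma~\ref{l2.6} with the $2\times 2$ Dickson matrix, and part (2) via the second formula of Lemma~\ref{l2.7} with $h=1$, $\phi=bx^2$, $\psi=\overline{\psi}=\operatorname{Tr}$, $g=x$. The paper does not spell out the hypothesis checks (commutation, $\phi$ bijective on $\mathbb{F}_q$) or the direct derivation via $\operatorname{Tr}(y)=b\operatorname{Tr}(x)^2$; your added versions are correct and make the argument self-contained.
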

\begin{proof}

(1) The result follows directly from Lemma~\ref{l2.6}.

(2) In Lemma~\ref{l2.7}, take $g=x$, $\phi = b x^2$, which permute $\mathbb{F}_{q^2}$ since $q$ is even, $\psi = \operatorname{Tr}(x)$, $\overline{\psi} = \operatorname{Tr}(x)$, and $h(x)=1$. Then
\[
\psi(\mathbb{F}_{q^2}) = \overline{\psi}(\mathbb{F}_{q^2}) = \mathbb{F}_q.
\]
Hence,
\[
f(x)=b x^2 + x^q + x,
\quad \text{and} \quad
\bar{f}(x)\big|_{\mathbb{F}_q}= b x^2.
\]
Therefore,
\[
\bar{f}^{-1}(x) = (b^{-1}x)^{2^{k-1}}
\quad \text{and} \quad
\phi^{-1}(x)= (b^{-1}x)^{2^{2k-1}}.
\]
Thus, the result follows.

\end{proof}

\section{Conclusion}
We investigate the permutation behavior of polynomials of the forms $x^q+bx^2+cx+d$ and $x^{q+1}+bx^q+cx+d
$ over $\mathbb{F}_{q^2}$ using character sums, particularly Weil sums. Furthermore, by employing self-conjugate-reciprocal polynomials, we establish several general classes of polynomials that do not permute $\mathbb{F}_{q^2}$. To be precise, we conclude that the polynomial $
x^{q+1}+bx^q+cx+d
$ never permutes $\mathbb{F}_{q^2}$, regardless of whether the characteristic is odd or even. The conclusions for the other family of polynomials are somewhat different.

For future research, one may further investigate the general strategy of determining the number of roots to characterize permutation behavior and apply it to other classes of polynomials.

	\end{document}